\documentclass{amsart}
\usepackage{amsthm}
\usepackage{amssymb}
\usepackage{amsmath}
\usepackage{amsmath,amscd}
\usepackage{color}
\usepackage{hyperref}
\usepackage[all,arc]{xy}
\usepackage{pdfsync}
\usepackage[title]{appendix}

\setlength{\textwidth}{6 in}
\setlength{\textheight}{8.75 in}
\setlength{\topmargin}{-0.25in}
\setlength{\oddsidemargin}{0.25in}
\setlength{\evensidemargin}{0.25in}

\newtheorem{thm}{Theorem}[section]
\newtheorem{cor}[thm]{Corollary}
\newtheorem{prop}[thm]{Proposition}
\newtheorem{lemma}[thm]{Lemma}

\theoremstyle{definition}
\newtheorem{defin}[thm]{Definition}

\theoremstyle{remark}
\newtheorem{rem}[thm]{Remark}

\newcommand{\nc}{\newcommand}
\nc{\rnc}{\renewcommand}
\nc{\bb}[1]{{\mathbb #1}}
\nc{\bbA}{\bb{A}}\nc{\bbB}{\bb{B}}\nc{\bbC}{\bb{C}}\nc{\bbD}{\bb{D}}
\nc{\bbE}{\bb{E}}\nc{\bbF}{\bb{F}}\nc{\bbG}{\bb{G}}\nc{\bbH}{\bb{H}}
\nc{\bbI}{\bb{I}}\nc{\bbJ}{\bb{J}}\nc{\bbK}{\bb{K}}\nc{\bbL}{\bb{L}}
\nc{\bbM}{\bb{M}}\nc{\bbN}{\bb{N}}\nc{\bbO}{\bb{O}}\nc{\bbP}{\bb{P}}
\nc{\bbQ}{\bb{Q}}\nc{\bbR}{\bb{R}}\nc{\bbS}{\bb{S}}\nc{\bbT}{\bb{T}}
\nc{\bbU}{\bb{U}}\nc{\bbV}{\bb{V}}\nc{\bbW}{\bb{W}}\nc{\bbX}{\bb{X}}
\nc{\bbY}{\bb{Y}}\nc{\bbZ}{\bb{Z}}
\nc{\mbf}[1]{{\mathbf #1}}
\nc{\bfA}{\mbf{A}}\nc{\bfB}{\mbf{B}}\nc{\bfC}{\mbf{C}}\nc{\bfD}{\mbf{D}}
\nc{\bfE}{\mbf{E}}\nc{\bfF}{\mbf{F}}\nc{\bfG}{\mbf{G}}\nc{\bfH}{\mbf{H}}
\nc{\bfI}{\mbf{I}}\nc{\bfJ}{\mbf{J}}\nc{\bfK}{\mbf{K}}\nc{\bfL}{\mbf{L}}
\nc{\bfM}{\mbf{M}}\nc{\bfN}{\mbf{N}}\nc{\bfO}{\mbf{O}}\nc{\bfP}{\mbf{P}}
\nc{\bfQ}{\mbf{Q}}\nc{\bfR}{\mbf{R}}\nc{\bfS}{\mbf{S}}\nc{\bfT}{\mbf{T}}
\nc{\bfU}{\mbf{U}}\nc{\bfV}{\mbf{V}}\nc{\bfW}{\mbf{W}}\nc{\bfX}{\mbf{X}}
\nc{\bfY}{\mbf{Y}}\nc{\bfZ}{\mbf{Z}}
\nc{\bfa}{\mbf{a}}\nc{\bfb}{\mbf{b}}\nc{\bfc}{\mbf{c}}\nc{\bfd}{\mbf{d}}
\nc{\bfe}{\mbf{e}}\nc{\bff}{\mbf{f}}\nc{\bfg}{\mbf{g}}\nc{\bfh}{\mbf{h}}
\nc{\bfi}{\mbf{i}}\nc{\bfj}{\mbf{j}}\nc{\bfk}{\mbf{k}}\nc{\bfl}{\mbf{l}}
\nc{\bfm}{\mbf{m}}\nc{\bfn}{\mbf{n}}\nc{\bfo}{\mbf{o}}\nc{\bfp}{\mbf{p}}
\nc{\bfq}{\mbf{q}}\nc{\bfr}{\mbf{r}}\nc{\bfs}{\mbf{s}}\nc{\bft}{\mbf{t}}
\nc{\bfu}{\mbf{u}}\nc{\bfv}{\mbf{v}}\nc{\bfw}{\mbf{w}}\nc{\bfx}{\mbf{x}}
\nc{\bfy}{\mbf{y}}\nc{\bfz}{\mbf{z}}

\newcommand{\h}{\mathrm{ht}}

\newcommand{\C}{{\mathbb C}}

\newcommand{\Z}{{\mathbb Z}}

\newcommand{\cO}{{\mathcal O}}

\nc{\mcal}[1]{{\mathcal #1}}
\nc{\calA}{\mcal{A}}\nc{\calB}{\mcal{B}}\nc{\calC}{\mcal{C}}\nc{\calD}{\mcal{D}}
\nc{\calE}{\mcal{E}} \nc{\calF}{\mcal{F}}\nc{\calG}{\mcal{G}}\nc{\calH}{\mcal{H}}
\nc{\calI}{\mcal{I}}\nc{\calJ}{\mcal{J}}\nc{\calK}{\mcal{K}}\nc{\calL}{\mcal{L}}
\nc{\calM}{\mcal{M}}\nc{\calN}{\mcal{N}}\nc{\calO}{\mcal{O}}\nc{\calP}{\mcal{P}}
\nc{\calQ}{\mcal{Q}}\nc{\calR}{\mcal{R}}\nc{\calS}{\mcal{S}}\nc{\calT}{\mcal{T}}
\nc{\calU}{\mcal{U}}\nc{\calV}{\mcal{V}}\nc{\calW}{\mcal{W}}\nc{\calX}{\mcal{X}}
\nc{\calY}{\mcal{Y}}\nc{\calZ}{\mcal{Z}}
\nc{\fA}{\frak{A}}\nc{\fB}{\frak{B}}\nc{\fC}{\frak{C}} \nc{\fD}{\frak{D}}
\nc{\fE}{\frak{E}}\nc{\fF}{\frak{F}}\nc{\fG}{\frak{G}}\nc{\fH}{\frak{H}}
\nc{\fI}{\frak{I}}\nc{\fJ}{\frak{J}}\nc{\fK}{\frak{K}}\nc{\fL}{\frak{L}}
\nc{\fM}{\frak{M}}\nc{\fN}{\frak{N}}\nc{\fO}{\frak{O}}\nc{\fP}{\frak{P}}
\nc{\fQ}{\frak{Q}}\nc{\fR}{\frak{R}}\nc{\fS}{\frak{S}}\nc{\fT}{\frak{T}}
\nc{\fU}{\frak{U}}\nc{\fV}{\frak{V}}\nc{\fW}{\frak{W}}\nc{\fX}{\frak{X}}
\nc{\fY}{\frak{Y}}\nc{\fZ}{\frak{Z}}
\nc{\fa}{\frak{a}}\nc{\fb}{\frak{b}}\nc{\fc}{\frak{c}} \nc{\fd}{\frak{d}}
\nc{\fe}{\frak{e}}\nc{\fFf}{\frak{f}}\nc{\fg}{\frak{g}}\nc{\fh}{\frak{h}}
\nc{\fri}{\frak{i}}\nc{\fj}{\frak{j}}\nc{\fk}{\frak{k}}\nc{\fl}{\frak{l}}
\nc{\fm}{\frak{m}}\nc{\fn}{\frak{n}}\nc{\fo}{\frak{o}}\nc{\fp}{\frak{p}}
\nc{\fq}{\frak{q}}\nc{\fr}{\frak{r}}\nc{\fs}{\frak{s}}\nc{\ft}{\frak{t}}
\nc{\fu}{\frak{u}}\nc{\fv}{\frak{v}}\nc{\fw}{\frak{w}}\nc{\fx}{\frak{x}}
\nc{\fy}{\frak{y}}\nc{\fz}{\frak{z}}

\DeclareMathOperator{\pt}{pt}

\DeclareMathOperator{\End}{End}

\DeclareMathOperator{\RHom}{RHom}
\DeclareMathOperator{\val}{val}
\DeclareMathOperator{\Ind}{Ind}

\begin{document}

\title{Whittaker functions from motivic Chern classes}

\date{\today}
\subjclass[2010]{Primary 33D80, 14M15; Secondary 14C17, 17B10}

\author[Leonardo C. Mihalcea]{Leonardo C.~Mihalcea}
\address{
Department of Mathematics, 
Virginia Tech University, 
Blacksburg, VA USA 24061
}
\email{lmihalce@vt.edu}
\thanks{L.C. Mihalcea was supported by a Simons Collaboration Grant}

\author[Changjian Su]{Changjian Su, \\ with an Appendix joint with Dave Anderson}
\address{Department of Mathematics, University of Toronto, Toronto, ON, Canada}
\email{changjiansu@gmail.com}

\address{Department of Mathematics, The Ohio State University, Columbus, OH USA 43210}
\email{anderson.2804@math.osu.edu}

\begin{abstract} We prove a `motivic' analogue of the Weyl character formula, computing the Euler 
characteristic of a line bundle on a generalized flag manifold $G/B$ multiplied either by a motivic 
Chern class of a Schubert cell, or a Segre analogue of it. The result, given in terms of 
Demazure-Lusztig (D-L) operators, {identifies an Euler characteristic above to a formula of Brubaker, Bump and 
Licata for the Iwahori-Whittaker functions of the principal series representation of the $p$-adic Langlands 
dual group. As a corollary, we recover the 
classical Casselman-Shalika formula for the spherical Whittaker function.}
The proofs are based on localization in equivariant K-theory, and require a geometric interpretation of how the Hecke  inverse
of a D-L operator acts on the class of a point. We prove that the Hecke inverse operators give Grothendieck-Serre dual classes of the motivic classes, a result which might be of independent interest. In an Appendix joint with Dave Anderson we show that if the line bundle is trivial, we recover a generalization of a classical formula by Kostant, Macdonald, Shapiro and Steinberg for the Poincar{\'e} polynomial of $G/B$; the generalization we consider is due to Aky{\i}ld{\i}z and Carrell and replaces $G/B$ by any smooth Schubert variety.        
\end{abstract} 

\maketitle
\section{Introduction} Let $G$ be a complex {reductive group}. Among the most influential formulas in 
mathematics is the Weyl character formula for an irreducible representation of $G$. For the purpose of 
this paper we consider the 
following version of this formula. Let $T \subset B \subset G$ be a Borel subgroup containing a 
maximal torus. Let $X :=G/B$ be the generalized flag manifold and 
$\mathcal{L}_\lambda:= G \times^B \C_\lambda$ 
the line bundle having fibre of weight $\lambda$ over $1.B$. {If $\lambda$ is 
anti-dominant, then $H^i(X,\calL_\lambda)=0$ for all $i>0$, and 
$H^0(X,\calL_\lambda)\simeq V_{w_0\lambda}$, 
where $w_0$ is the longest element in the Weyl group and $V_{w_0\lambda}$ is the 
irreducible highest weight representation with highest weight 
$w_0\lambda$; denote by $\chi_{w_0\lambda}$ its character.
The Weyl character formula calculates 
$\chi_{w_0\lambda}$ as the 
equivariant sheaf Euler characteristic:
\begin{equation}\label{E:euler} 
\chi_{w_0\lambda}=\chi(X, \mathcal{L}_\lambda) := \sum_i (-1)^i H^i(X, \mathcal{L}_\lambda)=\sum_{w\in W} \frac{e^{w\lambda}}{\prod_{\alpha>0}(1-e^{w\alpha})} \/,
\end{equation}
where the $T$-module 
$H^i(X, \mathcal{L}_\lambda)$ is identified with its character.
There are many proofs of this formula, and some of the earliest involved variants of the localization in $K_T(X)$, the $T$-equivariant K-theory of $X$; see e.g.~\cite{demazure:sur,demazure:desingularisations,nielsen:diag}. The goal of this paper is to study a generalization of the Euler characteristic in \eqref{E:euler} in the case when $\mathcal{L}_\lambda$ is replaced by $MC_y(X(w)^\circ) \otimes \mathcal{L}_\lambda$, the multiplication of $\mathcal{L}_\lambda$ by the motivic Chern class of a Schubert cell $X(w)^\circ \subset X$. We do not impose any restrictions on the character $\lambda$, and we regard 
$\chi(X, \mathcal{L}_\lambda)$
as (the character of) a virtual representation.
 
The motivic Chern transformation, defined in \cite{brasselet.schurmann.yokura:hirzebruch}, and extended to the equivariant case in \cite{FRW:motivic,AMSS19:motivic}, is a group homomorphism $MC_y: G_0^T(var/X) \to K_T(X)[y]$ from 
the Grothendieck group of equivariant varieties and morphisms over $X$, modulo the scissor relations, to the equivariant K-theory ring of $X$, to which one adjoins a formal parameter $y$. The transformation is determined by a functoriality property, and the normalization \[ MC_y[id_X: X \to X]= \lambda_y(T^*_X) := \sum y^i [\wedge^i T^*_X] \/, \] in the case when $X$ is smooth; see \S \ref{sec:motivic}. The right hand side is called the (Hirzebruch) $\lambda_y$-class of $X$, and it was used to study Riemann-Roch type statements; cf.~ e.g. ~\cite{hirzebruch:topological}. 

{In \cite{AMSS19:motivic}, a specialization of the equivariant K-theory $K_T(G/B)[y,y^{-1}]$ was identified, as a 
Hecke module, to the Iwahori-fixed part of the the principal series representation of a $p$-adic Langlands dual group. Under this identification, certain `Segre motivic classes' closely related to motivic Chern classes of Schubert cells, were sent to the standard basis elements of the principal series representation, and the `fixed point classes' were sent to the Casselman basis elements. In this
paper we make one step further.}   
It turns out that a variant of the equivariant Euler characteristic $\chi(X; MC_y(X(w)^\circ) \otimes \mathcal{L}_\lambda)$ 
{matches} classical and more recent formulas for the Iwahori-Whittaker functions associated to the principal series representation
\cite{reeder:padic,brubaker.bump.licata,lee.lenart.liu:whittaker,brubaker2019colored}, {see \S \ref{sec:whittaker}}. 
Our goal is to provide a geometric {interpretation} 
of the formulas from {\em loc.cit}, along with efficient proofs, based on equivariant localization. In fact, 
interpretations ({\em a posteriori}) closely related to ours were already present in both Reeder's and Brubaker, Bump and Licata's papers; we believe that our treatment based on the theory of motivic Chern classes simplifies, and provides a natural context, for these results.

We now give a brief description of our main results. Let $W$ be the Weyl group of $(G,T)$, and for $w \in W$ let $X(w)^\circ:= BwB/B$ denote the Schubert cell. Its motivic Chern class is $MC_y(X(w)^\circ) := MC_y[X(w)^\circ \hookrightarrow X]$, i.e. the class associated to the inclusion of the cell in $X$. For each simple root $\alpha_i$, let $\partial_i:K_T(X) \to K_T(X)$ denote the usual Demazure operator {(see \S \ref{sec:DL})}, and consider two Demazure-Lusztig operators acting on $K_T(X)[y]$:
\[ \mathcal{T}_i: = (1+y \mathcal{L}_{\alpha_i}) \partial_i - id; \quad \mathcal{T}_i^\vee: = \partial_i (1+y \mathcal{L}_{\alpha_i}) - id \/. \] 
Both operators satisfy the braid relations and the usual  quadratic relations in the Hecke algebra of $W$; cf.~\cite{lusztig:eqK}, see also Proposition \ref{prop:hecke-relations} below. They are adjoint to each other with respect to the K-theoretic Poincar{\'e} pairing. Localization at the point $1.B \in X$ induces an algebraic version of these operators, defined on $K_T(pt)[y]=R(T)[y]$, where $R(T)$ denotes the representation ring of $T$:
\[ \widetilde{\partial}_i (e^\lambda) := \partial_i(\mathcal{L}_\lambda)_{|1.B} \/; \quad \widetilde{T_i}:=(1+ye^{\alpha_i})\widetilde{\partial_i}-1,\textit{\quad and \quad }\widetilde{T^\vee_i}:=\widetilde{\partial_i}(1+ye^{\alpha_i})-1\/. \]
See \S \ref{ss:DLops} below.
Since the braid relations hold, both types of operators can be defined for any element $w \in W$. 
Our main result is the following (cf.~Theorem \ref{thm:characters} below).

\begin{thm}\label{thm:main1} Let $e^\lambda$ be any character of $T$ and let $\mathcal{L}_\lambda :=G \times^B \C_\lambda$ be the associated line bundle. Then the following hold:

(a) $
\chi\left(X, \calL_\lambda\otimes MC_y(X(w)^\circ)\right)=\widetilde{T^\vee_{w}}(e^\lambda)$;

(b) Let $\lambda_y(id):= \prod_{\alpha >0} (1+ye^\alpha)$ denote the equivariant $\lambda_y$ class of the cotangent space $T^*_{id}X$, and
denote by \[ MC'_y(X(w)^\circ):= \lambda_y(id) \frac{MC_y(X(w)^\circ)}{\lambda_y(T^*_X)} \/; \] this is an element in an appropriate localization of $K_T(X)[y]$. Then 
\[ \chi\left(X, \calL_\lambda\otimes MC_y'(X(w)^\circ)\right)=\widetilde{T_{w}}(e^\lambda) \/. \]
\end{thm}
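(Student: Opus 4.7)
The plan is to prove both identities by equivariant $K$-theoretic localization together with an induction on $\ell(w)$, using a geometric recursion that expresses $MC_y$ of a Schubert cell as a Demazure-Lusztig operator applied to $MC_y$ of a smaller cell.

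\textbf{Base case.} When $w = \mathrm{id}$, the cell $X(\mathrm{id})^\circ = \{eB\}$ is a single $T$-fixed point, so $MC_y(X(\mathrm{id})^\circ) = [\mathcal{O}_{eB}]$, and localization at $eB$ gives
\[
\chi(X, \mathcal{L}_\lambda \otimes [\mathcal{O}_{eB}]) = \mathcal{L}_\lambda|_{eB} = e^\lambda = \widetilde{T}^\vee_{\mathrm{id}}(e^\lambda).
\]
Part (b) is settled by the same calculation since $MC_y'(X(\mathrm{id})^\circ)|_{eB} = 1$.

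\textbf{Geometric recursion.} The key technical ingredient I would establish is, for a simple reflection $s_i$ with $\ell(s_iw) > \ell(w)$, a recursion of the form
\[
MC_y(X(s_iw)^\circ) = \mathcal{T}_i \cdot MC_y(X(w)^\circ),
\]
together with its Serre-dual counterpart $MC_y'(X(s_iw)^\circ) = \mathcal{T}_i^\vee \cdot MC_y'(X(w)^\circ)$. These arise from applying the motivic Chern transformation to the Bott-Samuelson-style projection $\pi_i : X \to G/P_i$: after decomposing $X(s_iw)^\circ$ via scissor relations in terms of $\pi_i^{-1}\pi_i(X(w)^\circ)$ and $X(w)^\circ$, the DL operator appears as the induced motivic transfer, and the Hecke-dual / Grothendieck-Serre-dual correspondence announced in the abstract is precisely what interchanges $\mathcal{T}_i$ with $\mathcal{T}_i^\vee$.

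\textbf{Inductive step.} The decisive observation is that the Euler characteristic map $\mathcal{F} \mapsto \chi(X, \mathcal{L}_\lambda \otimes \mathcal{F})$ intertwines the geometric operator $\mathcal{T}_i$ on $K_T(X)[y]$ with the algebraic operator $\widetilde{T}^\vee_i$ on $R(T)[y]$:
\[
\chi(X, \mathcal{L}_\lambda \otimes \mathcal{T}_i(\mathcal{F})) = \widetilde{T}^\vee_i\bigl(\chi(X, \mathcal{L}_\lambda \otimes \mathcal{F})\bigr).
\]
This follows from the adjointness of $\mathcal{T}_i$ and $\mathcal{T}_i^\vee$ under the Poincaré pairing, unpacked via localization: substituting $\mathcal{T}_i = (1+y\mathcal{L}_{\alpha_i})\partial_i - \mathrm{id}$ and matching against the defining formula $\widetilde{\partial}_i(e^\lambda) = \partial_i(\mathcal{L}_\lambda)|_{eB}$ reduces the identity to a check at the fixed points. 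Iterating along a reduced expression for $w$ starting from the base case produces $\widetilde{T}^\vee_w(e^\lambda)$, proving (a); the parallel argument in which $\mathcal{T}_i$ and $\widetilde{T}^\vee_i$ are replaced by $\mathcal{T}_i^\vee$ and $\widetilde{T}_i$, using the Serre-dual recursion for $MC_y'$, proves (b).

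The main obstacle is the geometric recursion itself, and in particular the identification that Grothendieck-Serre duality swaps the two DL operators acting on motivic Chern classes. This is the paper's announced technical contribution, and correctly pinning down signs and the precise DL conventions (as opposed to off-by-one shifts in $w$ or operator inverses) is the most delicate step; once the recursion is in hand, the rest is a clean localization computation driven by the induction.
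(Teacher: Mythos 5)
Your overall plan (express $MC_y$ of a cell as a Demazure--Lusztig operator applied to $\iota_{id}$, then use adjointness under the Poincar\'e pairing to match $\widetilde{T}^\vee_w$) is the same underlying mechanism the paper uses. But the inductive step as you wrote it contains a genuine error, and this is not a matter of convention.

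The claimed intertwining
\[
\chi\bigl(X, \calL_\lambda\otimes \calT_i(\calF)\bigr)=\widetilde{T^\vee_i}\bigl(\chi(X, \calL_\lambda\otimes \calF)\bigr)
\]
is false for general $\calF$, and it does not follow from the adjointness of $\calT_i$ and $\calT_i^\vee$. Adjointness gives $\langle\calL_\lambda,\calT_i(\calF)\rangle=\langle\calT_i^\vee(\calL_\lambda),\calF\rangle$, but $\calT_i^\vee(\calL_\lambda)$ is \emph{not} a line bundle, so the right-hand side is not of the form $\chi(X,\calL_\mu\otimes\calF)$ and cannot be absorbed into an application of $\widetilde{T^\vee_i}$ to the scalar $\chi(X,\calL_\lambda\otimes\calF)$. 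A quick counterexample: for $G=SL_2$ and $\calF=\iota_s$, one has $\chi(X,\calL_\lambda\otimes\calT_1(\iota_s))=-\frac{1+y}{1-e^\alpha}e^{s\lambda}+\frac{1+ye^\alpha}{1-e^\alpha}e^\lambda$, while $\widetilde{T^\vee_1}(e^{s\lambda})=-\frac{1+y}{1-e^{-\alpha}}e^{s\lambda}+\frac{1+ye^{-\alpha}}{1-e^{-\alpha}}e^\lambda$. Even if the identity held, the induction would produce the wrong order of composition: the recursion $MC_y(X(ws_i)^\circ)=\calT_i\, MC_y(X(w)^\circ)$ (it should be right-multiplication of $w$ by $s_i$, not left as you wrote, because $MC_y(X(w)^\circ)=\calT_{w^{-1}}(\iota_{id})$ has a $w^{-1}$) would give $\widetilde{T^\vee_i}\widetilde{T^\vee_w}(e^\lambda)$, whereas the target is $\widetilde{T^\vee_{ws_i}}(e^\lambda)=\widetilde{T^\vee_w}\widetilde{T^\vee_i}(e^\lambda)$, and these disagree because the DL operators do not commute.

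There is also a conflation of two distinct dualities: the paper's Grothendieck--Serre duality result is $\overline{\calT_w}=\calD\circ\calT_w\circ\calD$ (it relates $\calT_w$ to its Hecke-algebra \emph{inverse}), not an interchange of $\calT_i$ with $\calT_i^\vee$; the latter is the Poincar\'e-pairing adjointness. The correct and short argument is precisely the one your setup is circling: $\chi(X,\calL_\lambda\otimes MC_y(X(w)^\circ))=\langle\calL_\lambda,\calT_{w^{-1}}(\iota_{id})\rangle$, and then a \emph{single} adjointness move $\langle\calL_\lambda,\calT_{w^{-1}}(\iota_{id})\rangle=\langle\calT^\vee_w(\calL_\lambda),\iota_{id}\rangle=\widetilde{T^\vee_w}(e^\lambda)$ (the order reversal from $w^{-1}$ to $w$ comes for free from iterated adjointness moving each $\calT^\vee_{i_j}$ across the pairing one at a time). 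Part (b) is the same, using $MC'_y(X(w)^\circ)=\calT^\vee_{w^{-1}}(\iota_{id})$, which requires the identification of $\calT^\vee_{w^{-1}}(\iota_{id})$ via Grothendieck--Serre duality (the paper's Theorem on $\calD$ and $\overline{\calT_w}$), not the heuristic ``Serre-dual recursion'' you invoked.
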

{The expressions on the right hand side of this theorem have interpretations in $p$-adic representation theory.} 
For instance, the element $\widetilde{T_{w}}(e^\lambda)$ from 
(b) is the Iwahori-Whittaker function from \cite[Theorem 1]{brubaker.bump.licata}, {see Proposition \ref{prop:bbl}(3) below}. By the normalization property of motivic classes, $\sum_{w \in W} MC_y(X(w)^\circ) = \lambda_y(T^*_X)$, thus summing over all $w \in W$ in part (b), and by the Weyl character formula \eqref{E:euler}, one obtains 
\[
\lambda_y(id) \cdot\chi(X, \mathcal{L}_\lambda) = \sum_w \widetilde{T_w}(e^\lambda) \/. 
\]
This yields the classical Casselman-Shalika formula for the spherical Whittaker function \cite[Theorem 5.4]{casselman1980unramifiedII}, see Proposition \ref{prop:bbl} and Corollary \ref{cor:geomwhit} below. 
{A similar approach to the Casselman-Shalika formula, using the equivariant K-theory of the cotangent bundle $T^*_X$, was obtained in \cite[\S 9.6]{su2017k}.} The same normalization property applied to part (a) implies that \[ \sum_{w \in W} \widetilde{T_w^\vee} (e^\lambda) = \chi(X, \mathcal{L}_\lambda \otimes \lambda_y(T^*_X)) \/.\] This interpretation appeared in Reeder's paper \cite{reeder:padic}, and he proves it using localization techniques - the Lefschetz fixed point formula for a certain left multiplication morphism on $X$. (The use of this formula in a similar context goes back at least to Macdonald \cite{macdonald:poincare}.) In fact, Reeder obtained {\em two} cohomological interpretations, the other one in terms of certain local cohomology groups and the Cousin-Grothendieck complex; it would be very interesting to see if similar local cohomology groups {may be utilized to calculate $\chi(X; \mathcal{L}_\lambda \otimes MC_y(X(w)^\circ))$}.

The proof of part (a) is based on the fact proved in \cite{AMSS19:motivic} that $MC_y(X(w)^\circ) = \mathcal{T}_{w^{-1}}(\iota_{id})$, where $\iota_{id} \in K_T(X)$ is the class of the (equivariant) structure sheaf of the point $1.B$. Once this formula is available, the proof is extremely fast: the left hand side is given by the K-theoretic Poincar{\'e} pairing $\langle \mathcal{L}_\lambda, \mathcal{T}_{w^{-1}}(\iota_{id}) \rangle$, which by adjointness of $\mathcal{T}_{w^{-1}}$ and $\mathcal{T}_w^\vee$ equals to \[ \langle \mathcal{T}_w^\vee (\mathcal{L}_\lambda), \iota_{id} \rangle = \widetilde{T^\vee_{w}}(e^\lambda) \/. \] We refer to the proof of Theorem \ref{thm:characters} for the full details. The proof of part (b) is completely similar, but it requires the identification of the class corresponding to the operator $\mathcal{T}_w^\vee$. It was proved in \cite{AMSS19:motivic} that the {\em inverse} of this operator gives an analogue of the dual Segre class for motivic Chern classes; cf.~\eqref{equ:mot} below.  Therefore this problem requires us to consider the standard involution in the Hecke algebras $\mathcal{T}_w \mapsto \overline{\mathcal{T}_w}:= \mathcal{T}_{w^{-1}}^{-1}$ and analyze how the resulting operators are related to the motivic Chern classes. 

The answer turns out to be related to the Grothedieck-Serre duality operator. Recall that this operator is defined by $\calD(-):=\RHom_{X}(-,\omega_X^\bullet)$, where \[\omega_X^\bullet=\omega_{X}[\dim X]=(-1)^{\dim X}\mathcal{L}_{2 \rho}\]  is the shifted canonical bundle and $\rho$ is half the sum of positive roots. We extend it to $K_T(X)[y,y^{-1}]$ by requiring that $\calD(y^i)=y^{-i}$. We prove in Proposition \ref{prop:DT} that \[ \overline{\mathcal{T}_w}= \mathcal{D} \circ \mathcal{T}_w \circ \mathcal{D}\] as operators in $K_T(X)[y,y^{-1}]$. Then the following theorem answers our question above and it may be of independent interest; see Theorem \ref{thm:Doperator} below. To state it, let $Y(w)^\circ= B^-wB/B$ be the opposite Schubert cell, defined with respect to the Borel group $B^-=w_0 B w_0$; let $\iota_{w_0}$ be the class of the structure sheaf of the point $1.B^-$; and let $\lambda_y(w_0)$ be the equivariant $\lambda_y$-class of $T^*_{w_0}X$, the cotangent space at $1.B^-$. 
\begin{thm}\label{thm:main2} For any $w \in W$,
\[ \calD(MC_y(X(w)^\circ)) =\overline{\mathcal{T}_{w^{-1}}}(\iota_{id}); \quad \frac{MC_y(Y(w)^\circ)}{\lambda_y(T^*_X)} = \frac{1}{\lambda_y(w_0)} \mathcal{T}_{(w_0 w)^{-1}}^\vee(\iota_{w_0}) \/. \]
In particular, 
\[ \calD(MC_y(Y(w)^\circ)) = \overline{\mathcal{T}_{(w_0 w)^{-1}}} (\iota_{w_0}); \quad \frac{MC_y(X(w)^\circ)}{\lambda_y(T^*_X)} = \frac{1}{\lambda_y(id)} \mathcal{T}_{w^{-1}}^\vee(\iota_{id}) \/. \]
\end{thm}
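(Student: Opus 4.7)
The approach rests on three ingredients: the identity $MC_y(X(w)^\circ) = \mathcal{T}_{w^{-1}}(\iota_{id})$ from \cite{AMSS19:motivic}; Proposition \ref{prop:DT}, which gives $\overline{\mathcal{T}_w} = \mathcal{D}\circ\mathcal{T}_w\circ\mathcal{D}$; and a direct duality computation at a torus fixed point. First I would verify that $\mathcal{D}(\iota_v) = \iota_v$ for every $T$-fixed point $e_v\in X$, in particular for $v=id$ and $v=w_0$. This follows from Grothendieck-Serre duality for the closed embedding $i\colon\{e_v\}\hookrightarrow X$: the formula $\mathcal{D}(i_*\mathcal{O}_{e_v}) = i_*\bigl(\omega_{\{e_v\}}[0]\bigr)$ combined with the adjunction isomorphism $\omega_{\{e_v\}} = i^*\omega_X\otimes\det N_{\{e_v\}/X}$ shows that $\omega_{\{e_v\}}$ carries the trivial $T$-character, because $\det T_{e_v}X$ and $\det T^*_{e_v}X$ cancel; the sign $(-1)^{\dim X}$ in $\omega_X^\bullet$ is absorbed by the codimension shift on the $X$-side of Grothendieck duality.

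With this in hand, the first identity of Theorem \ref{thm:main2} is immediate:
\[
 \mathcal{D}(MC_y(X(w)^\circ)) = \mathcal{D}\bigl(\mathcal{T}_{w^{-1}}(\iota_{id})\bigr) = \bigl(\mathcal{D}\,\mathcal{T}_{w^{-1}}\,\mathcal{D}\bigr)\bigl(\mathcal{D}(\iota_{id})\bigr) = \overline{\mathcal{T}_{w^{-1}}}(\iota_{id}).
\]

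For the second identity, the plan is to use the analogous AMSS identification (equation \eqref{equ:mot} in the paper, expressing the motivic Segre class $MC_y(X(w)^\circ)/\lambda_y(T^*_X)$ in terms of $(\mathcal{T}_w^\vee)^{-1}$) and transport it to the opposite Borel $B^-$. Concretely, the left-translation isomorphism $L_{w_0}\colon gB\mapsto w_0 gB$ sends $X(w_0 w)^\circ$ onto $Y(w)^\circ$ and the fixed point $e_{id}$ onto $e_{w_0}$, but twists the $T$-equivariant structure by conjugation by $w_0$. Pushing the $X$-cell formula through $L_{w_0}$ interchanges the operators $\mathcal{T}_i$ and $\mathcal{T}_i^\vee$ (since $w_0$ sends positive roots to negative roots, which is precisely the distinction between the two Demazure-Lusztig operators), and produces the normalization factor $\lambda_y(T^*_X)/\lambda_y(w_0)$ as the ratio of $\lambda_y$-classes of the cotangent spaces at $e_{w_0}$ and $e_{id}$.

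The \emph{in particular} statements then follow by two parallel arguments: (3) is the dualization of (2) obtained by the same mechanism as (1), only with $\iota_{w_0}$ in place of $\iota_{id}$; and (4) is (2) under the swap of $B$ and $B^-$, hence of $X$ and $Y$ cells, $id$ and $w_0$, and $w$ and $w_0 w$. The main obstacle I expect is Step 3: carefully tracking the $T$-equivariant twist introduced by $L_{w_0}$, verifying that it correctly converts $\mathcal{T}$-operators into $\mathcal{T}^\vee$-operators, and confirming that the normalization factor $1/\lambda_y(w_0)$ emerges consistently. A potentially cleaner alternative would be to compute both sides of (2) by equivariant localization at each $T$-fixed point, matching them using the known localization formula for $MC_y(Y(w)^\circ)$ together with the explicit action of $\mathcal{T}^\vee$-operators on fixed-point classes.
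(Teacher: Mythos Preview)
Your proof of identity (1) is correct and matches the paper: once $\mathcal{D}(\iota_{id})=\iota_{id}$ is known (this is Lemma~\ref{lemma:Diw}), Proposition~\ref{prop:DT} gives the result immediately. Likewise, your claim that the second row follows from the first by applying the left $w_0$-translation is correct and is exactly what the paper does.

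However, your strategy for identity (2) has a genuine gap. The left translation $L_{w_0}$ does \emph{not} interchange $\mathcal{T}_i$ and $\mathcal{T}_i^\vee$: both operators are built from the $G$-equivariant data $p_i\colon G/B\to G/P_i$ and the $G$-equivariant line bundle $\mathcal{L}_{\alpha_i}$, so $\Phi_{w_0}^*$ commutes with each of them. (Concretely, in Lemma~\ref{lem:actiononfixedpoint} the difference between $\mathcal{T}_i$ and $\mathcal{T}_i^\vee$ is $e^{-w\alpha_i}$ versus $e^{w\alpha_i}$ in the off-diagonal term; the $w_0$-twist replaces $w$ by $w_0w$ in both, it does not swap the sign.) Consequently, transporting the second formula in~\eqref{equ:mot} by $L_{w_0}$ yields
\[
\frac{\mathcal{D}(MC_y(X(w)^\circ))}{\lambda_y(T^*_X)} = \frac{1}{\lambda_y(id)}\,(\mathcal{T}_{w}^\vee)^{-1}(\iota_{id}),
\]
which still carries both the $\mathcal{D}$ and the inverse operator; removing them would require the $\mathcal{T}^\vee$-analogue of Proposition~\ref{prop:DT}, which (cf.\ the Remark following that proposition) involves an extra conjugation by $\mathcal{L}_{2\rho}$ and does not directly give what you want.

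The paper proves (2) by a different route: a non-degeneracy argument for the Poincar\'e pairing. Using Lemma~\ref{lemma:prod} and the support condition $\langle \mathcal{T}_u(\iota_{id}),\iota_{w_0}\rangle=0$ for $u<w_0$, one computes
\[
\Bigl\langle \mathcal{D}(MC_y(X(u)^\circ)),\,\tfrac{1}{\lambda_y(w_0)}\mathcal{T}_{(w_0v)^{-1}}^\vee(\iota_{w_0})\Bigr\rangle = \delta_{u,v}\,(-y)^{-\ell(u)},
\]
and then compares this with the orthogonality~\eqref{E:Poincare} (after the $w_0$-swap) to conclude that $\frac{MC_y(Y(v)^\circ)}{\lambda_y(T^*_X)}$ and $\frac{1}{\lambda_y(w_0)}\mathcal{T}_{(w_0v)^{-1}}^\vee(\iota_{w_0})$ pair identically against a basis, hence coincide. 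Your fallback suggestion of a direct localization check is viable in principle, but the pairing argument is what makes the paper's proof short.
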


Another application of the localization formula for motivic Chern classes is obtained in an Appendix joint with Dave Anderson. If $\mathcal{L}_\lambda$ is trivial, then $\chi(X; MC_y(X)) = \chi(X;\lambda_y(T^*_X))$ can be calculated in two ways: on one side it yields the Poincar{\'e} polynomial of $X$; on the other side one can use the localization theorem.  The result is a formula for the Poincar{\'e} polynomial of $X$:

\begin{thm}\label{thm:mainapp} Let $w \in W$ be a Weyl group element such that the Schubert variety $X(w) \subset G/B$ is smooth. Then the Poincar{\'e} polynomial of $X(w)$ satisfies the equality 
\begin{equation*}\label{equ:intro-identity}
P(X(w),q)=\sum_{v \le w}q^{\ell(v)}=\prod_{\alpha>0,s_\alpha\leq w}\frac{1-q^{\h(\alpha)+1}}{1-q^{\h(\alpha)}},
\end{equation*} where $\ell(w)$ denotes the length of $w$, and $\h(\alpha)$ is the height of the root $\alpha$. 
\end{thm}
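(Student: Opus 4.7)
The strategy is to compute the equivariant Euler characteristic $\chi(X(w);\,\lambda_y(T^*_{X(w)}))$ in two ways, exploiting that $X(w)$ is smooth. On one side I plan to use Hirzebruch--Riemann--Roch together with the Bruhat cell decomposition $X(w)=\bigsqcup_{v\le w}X(v)^\circ$, whose affine cells $X(v)^\circ\cong\bbA^{\ell(v)}$ force $H^*(X(w);\C)$ to be concentrated in even degrees and pure of Hodge type $(p,p)$ with $h^{p,p}(X(w))=\#\{v\le w:\ell(v)=p\}$. This will give $\chi_y(X(w))=\sum_{v\le w}(-y)^{\ell(v)}$, and setting $y=-q$ recovers the first asserted equality $P(X(w),q)=\sum_{v\le w} q^{\ell(v)}$.

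On the other side, I will apply the Atiyah--Bott--Lefschetz localization in $T$-equivariant K-theory directly on the smooth variety $X(w)$. The Carrell--Peterson smoothness criterion identifies the tangent weights at each fixed point $vB$ as $\{-v\alpha:\alpha>0,\ vs_\alpha\le w\}$, so localization yields
\[ \chi_y(X(w))=\sum_{v\le w}\ \prod_{\substack{\alpha>0\\ vs_\alpha\le w}}\frac{1+y\,e^{v\alpha}}{1-e^{v\alpha}}. \]
I will then specialize the $T$-characters along the cocharacter $\rho^\vee$, sending $e^\alpha\mapsto q^{\h(\alpha)}$, and set $y=-q$. At $v=e$ the product specializes exactly to $\prod_{\alpha>0,\,s_\alpha\le w}(1-q^{\h(\alpha)+1})/(1-q^{\h(\alpha)})$, which is the claimed product formula.

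The crux of the argument is then to show that every contribution with $v\ne e$ vanishes under this specialization. For such $v$, choose a simple root $\alpha_i$ giving a left descent of $v$, i.e.\ $v^{-1}\alpha_i<0$ (which exists since $v\ne e$), and set $\alpha:=-v^{-1}\alpha_i>0$. The reflection identity $v\,s_{v^{-1}\alpha_i}=s_iv$ gives $vs_\alpha=s_iv<v\le w$, so $\alpha$ does index the product, while $v\alpha=-\alpha_i$ causes the corresponding numerator factor to become
\[ 1+(-q)\,q^{-\h(\alpha_i)}=1-q^{1-1}=0, \]
killing the entire contribution. The denominators $1-q^{\h(v\alpha)}$ remain nonzero, since $\h(v\alpha)\ne 0$ for any nonzero root. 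The main obstacle is invoking the Carrell--Peterson tangent-weight description, which genuinely uses the smoothness hypothesis; once that is secured, the vanishing at $v\ne e$ and hence the product formula follow from the short descent-theoretic computation above.
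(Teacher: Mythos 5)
Your proof is correct and follows essentially the same route as the paper: compute $\chi_y(X(w))$ both via the cellular decomposition (giving $\sum_{v\le w}(-y)^{\ell(v)}$) and via $T$-equivariant localization, then push forward along $\check\rho$ so that $e^\mu\mapsto q^{\h(\mu)}$, set $y=-q$, and observe that a left-descent root $\beta=-v^{-1}\alpha_i$ contributes a vanishing numerator factor $1-q^{\h(-\alpha_i)+1}=0$ for every $v\ne e$. The only cosmetic differences are that you justify the equality $\chi_y(X(w))=\sum_{v\le w}(-y)^{\ell(v)}$ by Hodge purity rather than by additivity of motivic Chern classes, and you localize directly on $X(w)$ instead of passing through the paper's general $\mathbb{G}_m$-equivariant statement, Theorem~\ref{thm:app}.
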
 
See Corollary \ref{cor:app} below. 
This version of the formula was proved by Aky{\i}ld{\i}z and Carrell \cite{akyildiz.carrell:Betti}, and it has a distinguished history, going back to Kostant \cite{kostant1959principal}, Macdonald \cite{macdonald:poincare}, Shapiro and Steinberg \cite{steinberg1959finite}, who proved it for $X(w) = X$.  In fact, we deduce it from a slightly more general statement about projective manifolds with a very special $\mathbb{G}_m$ action (Theorem~\ref{thm:app}).

{\em Acknowledgments:} The authors thank Paolo Aluffi and J{\"o}rg Sch{\"u}rmann for useful comments on this draft, and for collaborations on several related projects. This project started when the authors attended one of the workshops of the program ``Quiver Varieties and Representation Theory" at CRM in Montreal; both authors would like to thank the organizers for the stimulating environment, and CS would like to thank for the travel support. {We thank the anonymous referees for careful reading and useful suggestions. }

\section{Preliminaries} In this section, we recall basic facts about the equivariant K-theory ring, focusing eventually to the {special} case of the flag varieties. Our main references are \cite{chriss2009representation,nielsen:diag,lusztig:eqK}. 

\subsection{Equivariant K-theory and equivariant localization} Let $X$ be a projective complex manifold with an action of a {complex reductive} group $H$. The equivariant K-theory ring $K_H(X)$ is the Grothendieck ring generated by symbols $[E]$, where $E \to X$ is an $H$-equivariant vector bundle, modulo the relations $[E]=[E_1]+[E_2]$ for any short exact sequence $0 \to E_1 \to E \to E_2 \to 0$ of equivariant vector bundles. The additive ring structure is given by direct sum, and the multiplication is given by tensor products of vector bundles. Since $X$ is smooth, the ring $K_H(X)$ coincides with the Grothendieck group of {$H$}-linearized coherent sheaves on $X$. Indeed, any {$H$-linearized} coherent sheaf has a finite resolution by $H$-equivariant vector bundles; cf.~\cite[\S 5.1]{chriss2009representation}. 

The ring $K_H(X)$ is an algebra over $K_H(pt) = R(H)$, the representation ring of $H$. Later we will consider two situations: when $H:=G$ is a complex reductive
group, and when $H:=T$ is a maximal torus in $G$. In the second situation the representation ring 
is the Laurent polynomial ring $K_T(pt) =\Z[e^{\pm t_1}, \ldots , e^{\pm t_r}]$ where 
$e^{t_i}$ are characters corresponding to a basis of the Lie algebra of $T$.
This ring admits an action of the Weyl group $W:=N_G(T)/T$, and then $K_G(pt) = K_T(pt)^W$ is the Weyl group-invariant part.
An introduction to equivariant K-theory, and more details, can be found in \cite{chriss2009representation}.

Since $X$ is proper, the push-forward to a point equals the Euler characteristic, or, equivalently, the virtual representation, \[\chi(X, \mathcal{F})= \int_X [\mathcal{F}] := \sum_i (-1)^i H^i(X, \mathcal{F})\in K_H(pt) \/. \] 
For $E,F$ equivariant vector bundles, this gives the K-theoretic Poincar\'e pairing: \[ \langle - , - \rangle :K_H(X) \otimes K_H(X) \to K_H(pt); \quad \langle [E], [F] \rangle := \int_X E \otimes F = \chi(X, E \otimes F) \/. \]  
We recall next a version of the localization theorem in the case when $H:=T$ is a complex torus, which will be used throughout this note. Our main reference is Nielsen's paper \cite{nielsen:diag}; see also \cite{chriss2009representation}. Let $V$ be a (complex) vector space with a $T$-action, and consider its weight decomposition $V = \oplus_i V_{\mu_i}$, where $\mu_i$ is a certain set of weights in the dual of the Lie algebra of $T$. 
The {\em character} of $V$ is the element $ch(V):=\sum_i \dim V_{\mu_i} e^{\mu_i}$, regarded  in $K_T(pt)$. If $y$ is an indeterminate, the $\lambda_y$ class of $V$, denoted $\lambda_y(V)$, is the element 
\[ \lambda_y(V) =\sum_{i \ge 0} y^i ch(\wedge^i V) \in K_T(pt)[y] \/.\] 
A standard argument shows that for a short exact sequence 
$0 \to V_1 \to V_2 \to V_3 \to 0$, the $\lambda_y$ class is multiplicative, i.e. $\lambda_y(V_2) = \lambda_y(V_1) \lambda_y(V_3)$. 
In particular, $\lambda_y(V) = \prod_i (1+y e^{\mu_i})^{\dim V_{\mu_i}}$. 
Let $S$ be the multiplicative subset of $K_T(pt)$ generated by elements of 
the form $1-e^\mu$ for nontrivial torus weights $\mu$. 
If the fixed locus $V^T = \{ 0 \}$, then $S$ contains the element $\lambda_{-1}(V) = \prod (1-e^{\mu_i})^{\dim V_{\mu_i}}$.

Let now $X$ be a smooth, projective variety with a $T$-action, and assume that the fixed point set $X^T$ is finite. For each fixed point $x \in X^T$, denote by $T_x X$ the tangent space. This is a vector space with a $T$-action induced from that on $X$. Any $T$-linearized coherent sheaf $\mathcal{F}$ on $X$ determines a class $[\mathcal{F}] \in K_T(X)$. If $\mathcal{F}=E$ is a locally free sheaf, its $\lambda_y$ class is $\lambda_y(E)= \sum y^i [\wedge^i E] \in K_T(X)[y]$; as for vector spaces, this class is multiplicative for short exact sequences. For each $x \in X^T$, let $i_x: \{ x \} \to X$ denote the inclusion. This is a $T$-equivariant proper morphism, and it induces a map $i_x^*: K_T(X) \to K_T(\{x\}) = K_T(pt)$. Denote by $K_T(X)_{loc}$ respectively $K_T(pt)_{loc}$ the localization of $K_T(X)$ and of $K_T(pt)$ at $S$. Since $R(T)$ is a domain, we may identify $K_T(X)$ with a subring inside its localization.   We need the following simplified version of the localization theorem; cf.~ \cite{nielsen:diag}.

\begin{thm}\label{thm:loc} Let $N$ be the normal bundle of $X^T$ in $X$. Then the following hold:

(a) The class $\lambda_{-1}(N^\vee)$ is a unit in $K_T(X)_{loc}$;

(b) When $x$ varies in $X^T$, the structure sheaves $\iota_x:=[\cO_x]$ of the fixed points form a $K_T(pt)_{loc}$-basis of $K_T(X)_{loc}$;

(c) For any $T$-linearized coherent sheaf $\mathcal{F}$ on $X$, the following formula holds:

\[ \chi(X, \mathcal{F}) = \sum_{x \in X^T} \frac{\iota_x^* [\mathcal{F}]}{\lambda_{-1}(T^*_x X)}  \in K_T(pt)_{loc} \/. \]

\end{thm}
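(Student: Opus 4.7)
The plan is to deduce all three parts from the $T$-equivariant concentration theorem (Thomason), applied to the closed inclusion $i:X^T \hookrightarrow X$. The two key inputs are the self-intersection identity $i^*i_* = \cdot\,\lambda_{-1}(N^\vee)$ on $K_T(X^T)$, and the fact that after localizing at $S$, $i^*$ and $i_*$ become inverse isomorphisms between $K_T(X^T)_{loc}$ and $K_T(X)_{loc}$. Under our hypothesis $X^T$ is finite, so $K_T(X^T) = \bigoplus_{x \in X^T} K_T(\mathrm{pt})$.

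For part (a), fix $x \in X^T$. Since $x$ is an isolated fixed point, the $T$-representation $T_xX$ has no nonzero invariant vectors, so its weights $\{\mu_i\}$ are all nontrivial. Then $\lambda_{-1}(T^*_xX) = \prod_i (1-e^{-\mu_i})$ is a product of elements of $S$, hence a unit in $K_T(\mathrm{pt})_{loc}$. Assembling over all fixed points shows $\lambda_{-1}(N^\vee)$ is a unit in (the localization of) $K_T(X^T)$.

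For part (b), by the concentration theorem, $i^*$ is an isomorphism after localization. Writing $\iota_x = (i_x)_*(1)$ and applying base change at a fixed point $y$, one finds $i_y^*(\iota_x) = \delta_{xy}\lambda_{-1}(T^*_xX)$. Since each diagonal entry is a unit by (a), the set $\{i^*(\iota_x)\}_{x \in X^T}$ is a $K_T(\mathrm{pt})_{loc}$-basis of $K_T(X^T)_{loc}$, and pulling back through the isomorphism $i^*$ shows $\{\iota_x\}$ is a basis of $K_T(X)_{loc}$. For part (c), expand $[\mathcal{F}] = \sum_x c_x \iota_x$ in $K_T(X)_{loc}$; applying $i_x^*$ to both sides and inverting $\lambda_{-1}(T^*_xX)$ gives $c_x = i_x^*[\mathcal{F}] / \lambda_{-1}(T^*_xX)$. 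Since $\chi(X, \iota_x) = \chi(\{x\},1) = 1$, taking the Euler characteristic of both sides yields $\chi(X,\mathcal{F}) = \sum_x c_x$, which is the desired formula.

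The serious input is the concentration theorem used in (b); in our setting, however, one does not need Thomason's general result. Because $X$ is smooth projective with finite $X^T$, the Bia{\l}ynicki-Birula decomposition gives a $T$-stable filtration of $X$ by closed subvarieties whose successive quotients are affine cells indexed by $X^T$. The resulting long exact sequences in equivariant K-theory, combined with the vanishing of reduced $K_T$ of an equivariant affine space after localization at $S$, reduce the isomorphism statement to the fixed points one cell at a time. This is essentially Nielsen's approach, and is the only place where real work enters; everything else in parts (a) and (c) is a formal consequence.
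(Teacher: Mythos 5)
The paper does not prove Theorem \ref{thm:loc}; it cites Nielsen \cite{nielsen:diag} (see also \cite{chriss2009representation}) and moves on, so there is no proof in the paper to compare against. Your sketch is correct and is essentially a reconstruction of Nielsen's argument: the real input is the concentration theorem, which you correctly reduce, via the Bia{\l}ynicki--Birula filtration of the smooth projective $X$ and the vanishing of reduced localized $K_T$ of an equivariant affine cell, to the case of a point; parts (a) and (c) are then formal, as you say. Two minor points are worth flagging. The phrase ``$i^*$ and $i_*$ become inverse isomorphisms'' is not literally right: after inverting $S$ both maps are isomorphisms, but $i^*i_*$ is multiplication by the unit $\lambda_{-1}(N^\vee)$, not the identity. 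Your argument only uses that $i^*$ (equivalently $i_*$) is an isomorphism, so nothing breaks, but the wording should be corrected. Second, $\lambda_{-1}(N^\vee)$ naturally lives in $K_T(X^T)$, and you correctly read part (a) as asserting it becomes a unit in $K_T(X^T)_{loc}$; the theorem's ``$K_T(X)_{loc}$'' is a slight abuse of notation. Finally, in (b) you could skip the computation of $i_y^*\iota_x$: since $X^T$ is finite, $K_T(X^T)_{loc}=\bigoplus_x K_T(pt)_{loc}$ with the obvious basis, and $i_*$ carries that basis to $\{\iota_x\}$, so the basis statement follows directly from $i_*$ being an isomorphism. You need the diagonal computation $i_y^*\iota_x=\delta_{xy}\lambda_{-1}(T^*_xX)$ for part (c) anyway, so this is only a streamlining, not a correction.
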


This theorem implies in particular that if $E \to X$ is a $T$-equivariant vector bundle over $X$ then \begin{equation}\label{E:locy} \chi(X, \lambda_y(E)) = \sum_{x \in X^T} \frac{\lambda_y(E_x)}{\lambda_{-1}(T^*_x X)} \in K_T(pt)_{loc}[y]\/. \end{equation}

\subsection{Flag varieties}\label{sec:flagv} Let $G$ be a complex {reductive} group with a Borel subgroup $B$ and a maximal torus $T\subset B$. Denote by $B^-$ the opposite Borel subgroup. As before, let $W:=N_G(T)/T$ be the Weyl group, and $\ell:W \to \mathbb{N}$ be the associated length function. Denote by $w_0$ the longest element in $W$; then $B^- = w_0 B w_0$. 
Let also $\Delta := \{ \alpha_1, \ldots , \alpha_r \} \subset R^+$ denote the set of simple roots included in the set of positive roots for $(G,B)$. Let $\rho$ denote the half sum of the positive root. The simple reflection for the root $\alpha_i \in \Delta$ is denoted by $s_i$ and the {corresponding} {\em minimal} parabolic subgroup is denoted by $P_i$, containing the Borel subgroup $B$. 

Let $X:=G/B$ be the flag variety. The group $G$ acts by left multiplication, and $X$ is homogeneous under this action. It has a stratification by Schubert cells $X(w)^\circ:= BwB/B$, respectively opposite Schubert cells $Y(w)^\circ:=B^- w B/B$. The closures $X(w):= \overline{X(w)^\circ}$ and $Y(w):=\overline{Y(w)^\circ}$ are the Schubert varieties. With these definitions, $\dim_{\C} X(w) = \mathrm{codim}_{\C} Y(w) = \ell(w)$. The Weyl group $W$ admits a partial ordering, called the Bruhat ordering, defined by $u \le v$ if and only if $X(u) \subset X(v)$. For any character $\lambda$ of the maximal torus, define $\calL_\lambda:=G\times^B\bbC_\lambda$; this is a $G$-equivariant line bundle on $X$. 

Let $\cO_w:=[\cO_{X(w)}]$ be the Grothendieck class determined by the structure sheaf of $X(w)$ (a coherent sheaf), and similarly $\cO^{w}:= [\cO_{Y(w)}]$; both of these are classes in $K_T(X)$. The equivariant $K$-theory ring $K_T(X)$ has $K_T(pt)$-bases $\{ \cO_w \}_{w \in W}$ and $\{ \cO^{w} \}_{w \in W}$.~Consider next the localized equivariant K-theory ring $K_T(G/B)_{\textit{loc}}$. 
The Weyl group elements $w\in W$ are in bijection with the torus fixed points $e_w:=wB \in G/B$. Let $\iota_w:=[\cO_{e_w}] \in K_T(G/B)$ be the class of the structure sheaf of $e_w$. By the localization Theorem \ref{thm:loc}, the classes $\iota_w$ form a basis for the localized equivariant K-theory ring $K_T(X)_{loc}$; we call this the {\em fixed point basis}. Let $i_w: \{ e_w \} \to X$ be the inclusion; then for any $[\calF]\in K_T(G/B)$, let $[\calF]_{|w}\in K_T(pt)$ denote the pullback $i_w^*[\calF]$ to the fixed point $e_w$.

If $n_w$ is a representative for a Weyl group element $w$, the left multiplication by $n_w$ induces an automorphism $\Phi_w: G/B \to G/B$. This automorphism is not $T$-equivariant, but it is equivariant with respect to the conjugation map {$\varphi_w: T \to T$ defined by $t \mapsto n_w t n_w^{-1}$. In other words, 
\[ \Phi_w(t.gB) = \varphi_w(t).\Phi_w(gB) \/;\]}see e.g.~\cite{knutson:noncomplex,mihalcea2020left} and \cite[\S 5.2]{AMSS:shadows}. Therefore the pull-back of $\Phi_w$ induces an automorphism $\Phi_w^*: K_T(G/B) \to K_T(G/B)$ which twists the representation ring $K_T(pt)$ according to $\varphi_w^*$. The map $\varphi_w^*$ sends {the character $e^\lambda: T \to \C^*$ to the character $e^{w^{-1}(\lambda)}$ defined by $t \mapsto e^\lambda(n_w t n_w^{-1})$.} 
This gives a {(contragredient)} left action of $W$ on $K_T(pt)$ defined by 
$w.e^\lambda:=e^{w{(\lambda)}}$.
In particular, the automorphism $\Phi_w^*$ {\em fixes} the $W$-invariant part $K_T(pt)^W= K_G(pt)$. The pull-back by $\Phi_w$ gives a left action of $W$ on $K_T(X)$ defined by
$w.[\mathcal{F}]:= \Phi_{w^{-1}}^* ([\mathcal{F}])$. In terms of localizations:
\[(w.[\calF])_{|u}=w([\calF]_{|{w^{-1}u}})\/;\]
see \cite{knutson:noncomplex,mihalcea2020left}
and also \cite{tymoczko:permutations,IMN:double} for more on Weyl group actions on equivariant cohomology.
For any $u,w \in W$, this action satisfies:
\[ w.(\iota_{u} )= \iota_{w u} \/; \quad w_0.(\cO_w) = \cO^{w_0w} \/.\] 
The first equality follows because $\Phi_w^{-1} (e_{u}) = e_{w^{-1}u}$, and the second equality because $\Phi_{w_0}^{-1} (X(w)) = \overline{w_0 B w B/B}= Y(w_0 w)$ (recall that $w_0^{-1} = w_0$).

\subsection{Demazure-Lusztig operators}\label{sec:DL} Fix a simple root $\alpha_i \in \Delta$ and $P_i \subset G$ the corresponding minimal parabolic group. Consider the fiber product diagram: 
\begin{equation}\label{E:fibrediag} 
\xymatrix@C=50pt{
FP:= G/B \times_{G/P_{i}}
G/B \ar[r]^-{pr_1}\ar[d]^{pr_2} & G/B \ar[d]^{p_{i}} \\ 
G/B \ar[r]^{p_{i}} & G/P_{i}
} 
\end{equation} 
The Demazure operator \cite{demazure:desingularisations} $\partial_i: K_T(X) \to K_T(X)$ is defined by $\partial_i:= (p_i)^* (p_i)_*$. It satisfies 
\begin{equation}\label{equ:BGGonstru}
 \partial_i(\cO_w) = \begin{cases} \cO_{ws_i} & \textrm{ if } ws_i>w \/; \\ \cO_w & \textrm{ otherwise } \/. \end{cases} 
\end{equation}
See e.g. \cite{kostant1990t}. From this, one deduces that $\partial_i^2 = \partial_i$ and the operators $\partial_i$ satisfy the same braid relations as the elements in the Weyl group $W$. 

We define next the main operators used in this paper. First, consider the projection $p_i:G/B \to G/P_i$ determined by the minimal parabolic subgroup $P_i$, and let $T^*_{p_i}$ be the relative cotangent bundle. It is given by \[ T^*_{p_i}  = \mathcal{L}_{\alpha_i} = G \times^B \C_{\alpha_i} \/, \] i.e. it is the equivariant line bundle on $G/B$ with character $\alpha_i$ in the fibre over $1.B$. 

\begin{defin}\label{def:hecke} Let $\alpha_i \in \Delta$ be a simple root. Define the operators on $K_T(X)[y]$ \[ \mathcal{T}_i: = \lambda_y(T^*_{p_i}) \partial_i - id; \quad \mathcal{T}_i^\vee: = \partial_i \lambda_y(T^*_{p_i}) - id \/. \] \end{defin}
These two operators are $K_T(pt)[y]$-module endomorphisms of $K_T(X)[y]$, and they are adjoint to each other, i.e. $\langle \calT_i (a), b\rangle = \langle a, \mathcal{T}_i^\vee (b) \rangle$; cf.~\cite[Lemma 3.3]{AMSS19:motivic}.

\begin{rem}\label{rmk:convos} The operator ${\mathcal{T}_i}^\vee$ was defined by Lusztig \cite[Equation (4.2)]{lusztig:eqK} in relation to affine Hecke algebras and equivariant K-theory of flag varieties. The `dual' operator $\mathcal{T}_i$ arises naturally in the study of motivic Chern classes of Schubert cells; cf.~\S \ref{sec:motivic} below.
\end{rem}
\begin{prop}[\cite{lusztig:eqK}]\label{prop:hecke-relations} The operators $\mathcal{T}_i$ and $\mathcal{T}_i^\vee$ satisfy the braid relations and the following quadratic relation
\[ (\mathcal{T}_i + id) (\mathcal{T}_i + y) = ({\mathcal{T}_i}^\vee + id) ({\mathcal{T}_i}^\vee + y) = 0 \/. \] 
\end{prop}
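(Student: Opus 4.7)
The plan is to separate the two claims: the quadratic relations reduce to a direct computation in $K_T(X)[y]$ via equivariant localization, while the braid relations require a further reduction to an algebraic (rational function) setting.

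For the quadratic relations, I would first establish the identity $\partial_i(\mathcal{L}_{\alpha_i}) = -1$ in $K_T(X)$. This follows from equivariant localization along the $\mathbb{P}^1$-fibre of $p_i$ through each fixed point $e_w$: the push-pull formula yields
\[ \partial_i(\mathcal{L}_{\alpha_i})|_{e_w} = \frac{e^{w\alpha_i}}{1 - e^{w\alpha_i}} + \frac{e^{-w\alpha_i}}{1 - e^{-w\alpha_i}} = -1, \]
and since fixed-point values detect classes in $K_T(X)_{loc}$, the global identity follows. Using the projection formula for $p_i$, I would then record the key operator identity $\partial_i(g \cdot \partial_i(f)) = \partial_i(g) \cdot \partial_i(f)$, valid because $\partial_i(f)$ is $p_i$-pulled back. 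Writing $\mathcal{T}_i + \mathrm{id} = (1+y\mathcal{L}_{\alpha_i})\partial_i$ and $\mathcal{T}_i + y = (1+y\mathcal{L}_{\alpha_i})\partial_i + (y-1)\mathrm{id}$, the quadratic relation collapses to
\[ (\mathcal{T}_i + \mathrm{id})(\mathcal{T}_i + y) = (1+y\mathcal{L}_{\alpha_i})\bigl[ \partial_i(1+y\mathcal{L}_{\alpha_i}) + (y-1) \bigr] \partial_i = (1+y\mathcal{L}_{\alpha_i})\bigl[(1-y) + (y-1)\bigr]\partial_i = 0. \]
An analogous direct computation handles $\mathcal{T}_i^\vee$; alternatively, this follows via adjointness of $\mathcal{T}_i$ and $\mathcal{T}_i^\vee$, since the polynomial $(x+1)(x+y)$ is symmetric in its roots.

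For the braid relations, the plan is to pass to the algebraic setting via equivariant localization. Since $K_T(X)$ is a free $K_T(pt)$-module (by the Bruhat decomposition), it embeds into its localization $K_T(X)_{loc}$, where the fixed-point classes $\{\iota_w\}$ form a $K_T(pt)_{loc}$-basis. Therefore, it suffices to verify the braid relations on a spanning set. A localization computation identifies $\mathcal{T}_i$ with an operator of Demazure-Lusztig type acting on $K_T(pt)_{loc}[y]$ through the Weyl group action on characters, at which point the braid relations become identities of rational functions in the $e^\alpha$. One then reduces further to the rank-two case by restricting to the Levi subgroup of the parabolic $P_{\{i,j\}} \supset B$ generated by $s_i$ and $s_j$, where everything is governed by the dihedral group $\langle s_i, s_j\rangle$.

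The main obstacle is the rank-two verification itself, especially for types $B_2$ and $G_2$ where the braid has four or six factors and the identities are less immediate. These become finite, if somewhat tedious, algebraic computations in the characters; alternatively, one may invoke the identification of these operators with those of Lusztig in the affine Hecke algebra framework, for which the braid relations are a classical result. The braid relations for $\mathcal{T}_i^\vee$ follow again by adjointness.
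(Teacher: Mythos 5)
The paper offers no proof of this proposition; it simply cites Lusztig. Your proposal actually supplies a proof, so the right comparison is with the argument in \cite{lusztig:eqK} rather than with anything in this paper.

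Your treatment of the quadratic relation is correct and clean. The computation $\partial_i(\mathcal{L}_{\alpha_i})|_{e_w} = \frac{e^{w\alpha_i}}{1-e^{w\alpha_i}} + \frac{e^{-w\alpha_i}}{1-e^{-w\alpha_i}} = -1$ is right (restrict to the $\mathbb{P}^1$-fibre and localize at its two fixed points, noting that the relative cotangent weights there are $w\alpha_i$ and $-w\alpha_i$), and the projection-formula identity $\partial_i(g\cdot\partial_i f)=\partial_i(g)\cdot\partial_i(f)$ is exactly what makes the factored computation go through, since $\partial_i(1+y\mathcal{L}_{\alpha_i}) = 1-y$. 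One notational caution: in your displayed line, the term $\partial_i(1+y\mathcal{L}_{\alpha_i})$ inside the bracket must be read as the \emph{element} $1-y$ acting by multiplication, not as a composite operator --- the distinction matters when writing out the factorization, though the underlying computation is sound. Deducing the $\mathcal{T}_i^\vee$-relation by adjointness is fine, since the K-theoretic Poincar\'e pairing is non-degenerate and $(x+1)(x+y)$ is symmetric in its two factors, so the order-reversal under adjunction is harmless.

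For the braid relations your plan is essentially Lusztig's own: pass via the isomorphism $e^\lambda\mapsto\mathcal{L}_\lambda$ to the algebraic Demazure--Lusztig operators on $R(T)[y]$, and verify the identity there, reducing to the rank-two subsystems. The one step you leave implicit is why this reduction is legitimate: the map $A\mapsto\widetilde{A}$ of \S\ref{ss:DLops} is a ring isomorphism $\End_{K_G(pt)}K_G(G/B)\to\End_{K_G(pt)}K_T(pt)$ because $c$ and $i_{id}^*$ are mutually inverse, and the $\mathcal{T}_i$ are $K_G(pt)[y]$-linear; so the braid relation for $\widetilde{T_i}$ on $R(T)[y]$ implies it for $\mathcal{T}_i$ on $K_G(G/B)[y]$, which then extends scalars to $K_T(G/B)[y]$. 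With that bridge in place, your argument is a correct reconstruction. The residual rank-two computation (especially $B_2$ and $G_2$) is indeed the content of Lusztig's proof; you are honest that it is either a finite computation or a citation, and either is acceptable.
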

 
The action of the Hecke operators on the fixed point basis is recorded below; see e.g.~\cite[Lemma 3.7]{AMSS19:motivic} for a proof.
\begin{lemma}\label{lem:actiononfixedpoint} The following formulas hold in $K_T(G/B)_{loc}$:

(a) For any weight $\lambda$, $\mathcal{L}_\lambda \cdot \iota_w = e^{w \lambda} \iota_w$, and $\chi(G/B,\iota_w)=1$;

(b) For any simple root $\alpha_i$, \[ \partial_i (\iota_w) =\frac{1}{1-e^{w\alpha_i}}\iota_{w}+\frac{1}{1-e^{-w\alpha_i}}\iota_{ws_{i}} \/; \]

(c) The action of the operator $\calT_i$ on the fixed point basis is given by the following formula
\[\calT_i(\iota_{w})=-\frac{1+y}{1-e^{-w\alpha_i}}\iota_{w}+\frac{1+ye^{-w\alpha_i}}{1-e^{-w\alpha_i}}\iota_{ws_{i}}.\]

(d) The action of the adjoint operator $\calT_i^\vee$ is given by
\[\calT^\vee_i(\iota_{w})=-\frac{1+y}{1-e^{-w\alpha_i}}\iota_{w}+\frac{1+ye^{w\alpha_i}}{1-e^{-w\alpha_i}}\iota_{ws_{i}} \/.\]
\end{lemma}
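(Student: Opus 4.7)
The plan is to reduce everything to the geometry of the $\mathbb{P}^1$-bundle $p_i\colon G/B \to G/P_i$, whose fibre $\mathbb{P}^1_w := p_i^{-1}(p_i(e_w))$ through $e_w$ contains exactly the two $T$-fixed points $e_w$ and $e_{ws_i}$, with cotangent weights on the fibre obtained by restricting the relative cotangent bundle $T^*_{p_i} = \mathcal{L}_{\alpha_i}$: namely $w\alpha_i$ at $e_w$ and $(ws_i)\alpha_i = -w\alpha_i$ at $e_{ws_i}$. Part (a) is then immediate from $\iota_w = [\mathcal{O}_{e_w}]$: tensoring with $\mathcal{L}_\lambda$ multiplies by the $T$-weight $e^{w\lambda}$ of its fibre at $e_w$, and $\chi(G/B, \iota_w) = \chi(\mathrm{pt}, \mathcal{O}_{e_w}) = 1$ by functoriality of the pushforward through $G/B \to \mathrm{pt}$.

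For (b), I first compute $(p_i)_*\iota_w = (p_i \circ i_{e_w})_*\mathcal{O}_{e_w} = [\mathcal{O}_{p_i(e_w)}] \in K_T(G/P_i)$, and then use flatness of $p_i$ to conclude $\partial_i(\iota_w) = p_i^*[\mathcal{O}_{p_i(e_w)}] = [\mathcal{O}_{\mathbb{P}^1_w}]$. To expand this in the fixed point basis, I will invoke the general localization identity (a direct consequence of Theorem~\ref{thm:loc} and the Koszul resolution of $\mathcal{O}_Z$) that for a smooth $T$-invariant closed subvariety $Z \subset X$ with isolated $T$-fixed points,
\[ [\mathcal{O}_Z] = \sum_{v \in Z^T} \frac{\iota_v}{\lambda_{-1}(T^*_v Z)} \quad \text{in } K_T(X)_{\mathit{loc}}. \]
Applied to $Z = \mathbb{P}^1_w$ with the cotangent weights above, this yields the stated formula for $\partial_i(\iota_w)$.

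Parts (c) and (d) are then algebraic consequences of (a) and (b). For (c), substitute (b) into $\mathcal{T}_i(\iota_w) = (1+y\mathcal{L}_{\alpha_i})\partial_i(\iota_w) - \iota_w$ and use (a) to evaluate the operator $(1+y\mathcal{L}_{\alpha_i})$ on the two summands $\iota_w$ and $\iota_{ws_i}$ as multiplication by $(1+ye^{w\alpha_i})$ and $(1+ye^{-w\alpha_i})$ respectively; after subtracting $\iota_w$, the coefficient of $\iota_w$ telescopes via the identity $e^{w\alpha_i}(1+y)/(1-e^{w\alpha_i}) = -(1+y)/(1-e^{-w\alpha_i})$. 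For (d), first apply $(1+y\mathcal{L}_{\alpha_i})$ to $\iota_w$ using (a), obtaining the scalar multiple $(1+ye^{w\alpha_i})\iota_w$; since $1+ye^{w\alpha_i} \in R(T)[y]$ is inherited through the structural map $K_T(\mathrm{pt}) \to K_T(G/P_i) \to K_T(G/B)$, the projection formula for $\partial_i = p_i^*(p_i)_*$ lets one pull this scalar outside $\partial_i$, after which (b) finishes the computation.

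The main subtlety I expect to check carefully is the linearity used in (d): $\partial_i$ is not $R(T)$-linear when applied to genuinely non-trivial line bundles on $G/B$ such as $\mathcal{L}_\lambda$, but it is $R(T)$-linear for scalars coming through the structural map from $K_T(\mathrm{pt})$, which is precisely how $1+ye^{w\alpha_i}$ enters. The other key input for (b) is the structure-sheaf identity above, which I would justify by comparing restrictions at each fixed point: both sides vanish outside $Z^T$, while at $v \in Z^T$ the restriction of $[\mathcal{O}_Z]$ is $\lambda_{-1}(N^*_{Z/X,v})$ (by Koszul) and $\iota_v|_{e_v} = \lambda_{-1}(T^*_v X) = \lambda_{-1}(T^*_v Z)\,\lambda_{-1}(N^*_{Z/X,v})$, so the quotient is $1/\lambda_{-1}(T^*_v Z)$ as required.
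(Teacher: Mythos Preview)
Your argument is correct.  The paper itself does not prove this lemma at all; it simply cites \cite[Lemma~3.7]{AMSS19:motivic}.  Your write-up therefore supplies a self-contained proof where the paper defers to the literature, and the route you take---identifying $\partial_i(\iota_w)$ with the structure sheaf of the $\mathbb{P}^1$-fibre through $e_w$, then expanding that class in the fixed point basis via localization, and finally reading off (c) and (d) from (a), (b) and the $K_T(\mathrm{pt})$-linearity of $\partial_i$---is the standard one and is essentially what the cited reference does.  The point you flag about (d), namely that the scalar $1+ye^{w\alpha_i}$ lies in $K_T(\mathrm{pt})[y]$ and therefore commutes with $\partial_i=p_i^*(p_i)_*$ by the projection formula (even though $\partial_i$ is not linear over $K_T(G/B)$), is exactly the right subtlety to isolate.
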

\subsection{Demazure-Lusztig operators on $K_T(pt)$}\label{ss:DLops}
Define $c: K_T(pt) \to K_G(G/B)$ by $e^\lambda \mapsto [\mathcal{L}_\lambda]$. This is an isomorphism of $K_G(pt)$-algebras, with inverse given by {$i^*_{id} [\mathcal{F}] = [\mathcal{F}]_{|id}$}, the localization at $1.B$; see e.g.~\cite[Chapter 6]{chriss2009representation}. Now let ${\calA}: K_G(G/B) \to K_G(G/B)$ be any $K_G(pt)$-linear endomorphism.~Associated to ${\calA}$ one can define the linear map $\widetilde{A}:K_T(pt) \to K_T(pt)$ given by composition \[ \widetilde{A}:\xymatrix{ K_T(pt) \ar[r]^{c} & K_G(G/B) \ar[r]^{{\calA}} & K_G(G/B) \ar[r]^{i^*_{id}} & K_T(pt) \/.} \] 
The homomorphism $\widetilde{A}$ is $K_G(pt)$-linear. Equivalently, 
\begin{equation}\label{equ:tildeoperators}
\widetilde{A}(e^\lambda) = \langle {\calA} (\mathcal{L}_\lambda), \iota_{id} \rangle= {\calA} (\mathcal{L}_\lambda)_{|id}\/. 
\end{equation} 
The last equality holds because by projection formula $\langle {\calA} (\mathcal{L}_\lambda), \iota_{id} \rangle = \int_{e_{id}} i_{id}^*({\calA} (\mathcal{L}_\lambda)) =  i_{id}^*({\calA} (\mathcal{L}_\lambda)) $. We record the formulas for the associated Demazure-Lusztig operators from the previous section.
\begin{cor}\label{cor:operators} 
The following hold in $\End_{\bbZ[y]}K_T(pt)[y]$:
\[\widetilde{\partial_i}(e^\lambda)=\frac{e^\lambda}{1-e^{\alpha_i}}+\frac{e^{s_i\lambda}}{1-e^{-\alpha_i}},\] 
\[\widetilde{T_i}(e^\lambda)=-e^\lambda\frac{1+y}{1-e^{-\alpha_i}}+e^{s_i\lambda}\frac{1+ye^{\alpha_i}}{1-e^{-\alpha_i}},\] 
\[\widetilde{T_i^\vee}(e^\lambda)=-e^\lambda\frac{1+y}{1-e^{-\alpha_i}}+e^{s_i\lambda}\frac{1+ye^{-\alpha_i}}{1-e^{-\alpha_i}}.\]
\end{cor}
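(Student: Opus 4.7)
Proof plan --- the three identities share a common template. Apply the definition $\widetilde{A}(e^\lambda) = A(\calL_\lambda)|_{id}$ with $A$ replaced by $\partial_i$, $\calT_i$, and $\calT_i^\vee$, and leverage the multiplicative formula $\calL_\mu \cdot \iota_w = e^{w\mu}\iota_w$ from Lemma \ref{lem:actiononfixedpoint}(a) together with the action of $\partial_i$ on the fixed-point basis from Lemma \ref{lem:actiononfixedpoint}(b). The linchpin is the identity for $\widetilde{\partial_i}$; the other two will reduce to it by pure substitution.

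I would derive the formula for $\widetilde{\partial_i}$ by expanding $\calL_\lambda$ in the fixed point basis of the localized ring $K_T(X)_{loc}$. Since any equivariant class satisfies $[\mathcal{F}] = \sum_w \frac{[\mathcal{F}]|_w}{\lambda_{-1}(T^*_w X)} \iota_w$, in particular $\calL_\lambda = \sum_w \frac{e^{w\lambda}}{\lambda_{-1}(T^*_w X)} \iota_w$. Applying Lemma \ref{lem:actiononfixedpoint}(b) termwise and extracting the coefficient of $\iota_{id}$ --- which only receives contributions from $w=id$ (through the first summand) and $w=s_i$ (through the second) --- yields
\[ \partial_i(\calL_\lambda)|_{id} = \frac{e^\lambda}{1-e^{\alpha_i}} + \frac{e^{s_i\lambda}}{\lambda_{-1}(T^*_{s_i}X)} \cdot \frac{\lambda_{-1}(T^*_{id}X)}{1-e^{-s_i\alpha_i}}. \]
Using $s_i\alpha_i=-\alpha_i$ and the fact that $s_i$ permutes the positive roots other than $\alpha_i$, one computes $\lambda_{-1}(T^*_{id}X)/\lambda_{-1}(T^*_{s_i}X) = (1-e^{\alpha_i})/(1-e^{-\alpha_i})$, which collapses the second summand to $e^{s_i\lambda}/(1-e^{-\alpha_i})$. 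A shorter alternative route is to invoke $\partial_i = p_i^* (p_i)_*$ and integrate $\calL_\lambda$ along the $\mathbb{P}^1$-fiber $P_i/B$ of $p_i$ via equivariant localization on $\mathbb{P}^1$.

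The remaining two formulas reduce to this one. For $\widetilde{T_i}$, since $\lambda_y(T^*_{p_i})=1+y\calL_{\alpha_i}$ and $\calL_{\alpha_i}|_{id}=e^{\alpha_i}$, the operator $(1+y\calL_{\alpha_i})\partial_i - id$ evaluates at $id$ to
\[ \widetilde{T_i}(e^\lambda) = (1+ye^{\alpha_i})\,\widetilde{\partial_i}(e^\lambda) - e^\lambda. \]
For $\widetilde{T_i^\vee}$, the twist by $1+y\calL_{\alpha_i}$ is applied before $\partial_i$, giving
\[ \widetilde{T_i^\vee}(e^\lambda) = \widetilde{\partial_i}(e^\lambda) + y\,\widetilde{\partial_i}(e^{\lambda+\alpha_i}) - e^\lambda. \]
Substituting the closed form for $\widetilde{\partial_i}$ and combining over the common denominator $1-e^{-\alpha_i}$ produces the displayed expressions. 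There is no substantive obstacle: the proof is a one-step localization computation followed by routine rational-function manipulation, and the only delicate point is keeping track of signs and the identity $s_i\alpha_i=-\alpha_i$ when clearing denominators.
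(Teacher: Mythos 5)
Your proof is correct, but it takes a genuinely different route from the paper's. The paper's proof leans on adjointness: it rewrites $\widetilde{\partial_i}(e^\lambda)=\langle\partial_i(\calL_\lambda),\iota_{id}\rangle=\langle\calL_\lambda,\partial_i(\iota_{id})\rangle$ by self-adjointness of $\partial_i$, and similarly $\widetilde{T_i}(e^\lambda)=\langle\calL_\lambda,\calT_i^\vee(\iota_{id})\rangle$ and $\widetilde{T_i^\vee}(e^\lambda)=\langle\calL_\lambda,\calT_i(\iota_{id})\rangle$ by adjointness of $\calT_i$ and $\calT_i^\vee$; all three formulas then drop out of Lemma~\ref{lem:actiononfixedpoint}\,(a)--(d) because $\langle\calL_\lambda,\iota_w\rangle=e^{w\lambda}$. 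You instead use the other interpretation in \eqref{equ:tildeoperators}, $\widetilde{A}(e^\lambda)=A(\calL_\lambda)|_{id}$, expand $\calL_\lambda=\sum_w \frac{e^{w\lambda}}{\lambda_{-1}(T^*_wX)}\iota_w$ in the fixed-point basis, push $\partial_i$ through termwise via Lemma~\ref{lem:actiononfixedpoint}\,(b), and restrict --- which requires the rational-function simplification $\lambda_{-1}(T^*_{id}X)/\lambda_{-1}(T^*_{s_i}X)=(1-e^{\alpha_i})/(1-e^{-\alpha_i})$ coming from $s_i$ permuting the positive roots other than $\alpha_i$. You then get the $\calT_i$ and $\calT_i^\vee$ formulas by pure algebraic substitution from $\widetilde{\partial_i}$, rather than by invoking the precomputed actions in Lemma~\ref{lem:actiononfixedpoint}\,(c) and (d). What this buys: your route avoids adjointness entirely and makes no reference to the action of $\calT_i,\calT_i^\vee$ on the fixed-point basis, so it is more self-contained; what it costs is the extra denominator bookkeeping that the paper's appeal to Lemma~\ref{lem:actiononfixedpoint} packages away. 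Both are valid localization arguments and land on the same identities.
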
 
\begin{rem} The operator $\widetilde{T_i}$ appeared in \cite[Equation (3)]{brubaker.bump.licata} and \cite[\S 2]{lee.lenart.liu:whittaker}, in relation to Whittaker functions; the operator $\widetilde{T^\vee_i}$ appeared in \cite[Equation (8.1)]{lusztig:eqK} (with the opposite choice of positive roots).
Also observe that 
\[\widetilde{T_i}=(1+ye^{\alpha_i})\widetilde{\partial_i}-1 \textit{\quad and \quad }\widetilde{T^\vee_i}=\widetilde{\partial_i}(1+ye^{\alpha_i})-1.\]
\end{rem}
\begin{proof}[Proof of Corollary \ref{cor:operators}] By definition and the self-adjointness of $\partial_i$,
\[ \widetilde{\partial_i}(e^\lambda) = \langle \partial_i(\mathcal{L}_\lambda),\iota_{id} \rangle = \langle \mathcal{L}_\lambda, \partial_i(\iota_{id}) \rangle\/; \] then the result follows from (a) and (b) of the Lemma \ref{lem:actiononfixedpoint}.

Since $\calT_i$ and $\calT^\vee_i$ are adjoint to each other, we obtain
\[\widetilde{T_i}(e^\lambda)=\langle\calT_i(\calL_\lambda),\iota_{id}\rangle=\langle\calL_\lambda,\calT_i^\vee(\iota_{id})\rangle=-e^\lambda\frac{1+y}{1-e^{-\alpha_i}}+e^{s_i\lambda}\frac{1+ye^{\alpha_i}}{1-e^{-\alpha_i}}.\]
The last equality follows similarly.
\end{proof}

\section{Serre duality and the Hecke involution} As before, $X=G/B$. In this section we compare two duality operators. The first is the Grothendieck-Serre duality operator $\calD(-):=\RHom_{X}(-,\omega_X^\bullet)$, where \[\omega_X^\bullet=\omega_{X}[\dim X]=(-1)^{\dim X}\mathcal{L}_{2 \rho}\]  is the shifted canonical bundle. We extend it to $K_T(X)[y,y^{-1}]$ by requiring that $\calD(y^i)=y^{-i}$; then $\mathcal{D}^2 = id$. Since the canonical bundle $\omega_X$ is locally free it follows that for a sheaf $\mathcal{F}$ on $X$, $\calD(\mathcal{F}):=(-1)^{\dim X} [\mathcal{F}]^\vee \otimes [\omega_X]$, where $[\mathcal{F}]^\vee$ is calculated by taking a (finite) resolution by vector bundles, then taking the dual of this resolution. Of course, the same works in the context of equivariant K-theory and equivariant sheaves, using an equivariant resolution by vector bundles \cite{chriss2009representation}. The presence of the vector bundle dual implies that $\mathcal{D}$ is not $K_T(pt)$-linear, but it twists the coefficients in $K_T(pt)$ by sending $e^\lambda \mapsto (e^\lambda)^\vee:= e^{-\lambda}$. Since the pull-back induces a ring homomorphism in equivariant K-theory it follows that for any fixed point $e_w$ in $X$,  \begin{equation}\label{E:Dcompat} i_w^*[\calD(\mathcal{F})] = (-1)^{\dim X} ([\mathcal{F}]_{|w})^\vee \cdot (\omega_X)_{|w} \/. \end{equation} We need the following Lemma:

\begin{lemma}\label{lemma:Diw} For any $w \in W$, $\mathcal{D}(\iota_w) = \iota_w$. \end{lemma}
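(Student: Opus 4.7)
The plan is to verify $\mathcal{D}(\iota_w) = \iota_w$ by computing the restrictions $i_u^*\mathcal{D}(\iota_w)$ at every $T$-fixed point $e_u$ and comparing with $i_u^*\iota_w$; by Theorem \ref{thm:loc}(b), agreement after localization forces equality, and $\mathcal{D}(\iota_w)$ lies in the integral ring $K_T(X)$ because Serre duality preserves it.

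For $u \neq w$, one has $i_u^*\iota_w = 0$ since $\iota_w = [\mathcal{O}_{e_w}]$ is supported at $e_w$, and \eqref{E:Dcompat} then gives $i_u^*\mathcal{D}(\iota_w) = (-1)^{\dim X}(i_u^*\iota_w)^\vee \cdot \omega_X|_u = 0$ as well. So the only nontrivial check is at $u = w$.

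For $u = w$, the self-intersection formula gives $i_w^*\iota_w = \lambda_{-1}(T^*_{e_w}X) = \prod_{\alpha > 0}(1 - e^{w\alpha})$, using that the cotangent space $T^*_{e_w}X$ has weights $\{w\alpha : \alpha > 0\}$ (compatible with $\mathcal{L}_{\alpha_i} = T^*_{p_i}$ from Definition \ref{def:hecke}). Combined with $\omega_X|_w = e^{w(2\rho)} = \prod_{\alpha > 0} e^{w\alpha}$, the key calculation is
$$(-1)^{\dim X} \prod_{\alpha > 0}(1 - e^{-w\alpha}) \cdot \prod_{\alpha > 0} e^{w\alpha} = (-1)^{\dim X}\prod_{\alpha > 0}(e^{w\alpha} - 1) = \prod_{\alpha > 0}(1 - e^{w\alpha}) = i_w^*\iota_w,$$
using $|R^+| = \dim X$ to cancel the two signs of $(-1)^{\dim X}$.

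This is essentially a bookkeeping argument, and I do not anticipate any conceptual obstacle. The mild trap is tracking the sign $(-1)^{\dim X}$, which comes from the shift $\omega_X^\bullet = \omega_X[\dim X]$ in the definition of $\mathcal{D}$ and is cancelled by factoring $e^{w\alpha}$ out of each of the $|R^+| = \dim X$ terms in the product.
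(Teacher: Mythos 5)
Your proposal is correct and follows essentially the same route as the paper: localize at each fixed point, show that \eqref{E:Dcompat} gives $0$ for $u \neq w$, and at $u=w$ identify $i_w^*\iota_w$ with $\lambda_{-1}(T^*_{e_w}X)$ and perform the same sign-and-product manipulation using $|R^+|=\dim X$, concluding by injectivity of the localization map.
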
 
\begin{proof} By the equation \eqref{E:Dcompat}, it follows that 
\[\calD(\iota_w)|_u=\delta_{w,u}(-1)^{\dim X}(\iota_w|_w)^\vee \cdot \wedge^{\dim X} (T_{e_w}^* X) =  \delta_{w,u} (-1)^{\dim X} (\lambda_{-1} (T^*_{e_w} X))^\vee \cdot \wedge^{\dim X} (T_{e_w}^* X) \/. \] The weights of the cotangent space $T^*_{e_w}X$ are $e^{w\alpha}$ for $\alpha$ varying over the positive roots. The term on the right equals \[ \delta_{u,w} (-1)^{\dim X} \prod_{\alpha>0} (1-e^{-w(\alpha)}) \times \prod_{\alpha >0}  e^{w(\alpha)} = \delta_{u,w} \prod_{\alpha>0} (1-e^{w(\alpha)}) = (\iota_w)_{|u} \/. \] Combining the two equations shows that $\calD(\iota_w)|_u =  (\iota_w)_{|u}$, from which we deduce the lemma, by injectivity of the localization map. 
\end{proof}

The second duality is determined by an involution on the Hecke algebra. Define: \begin{equation}\label{E:defbarT} \overline{\mathcal{T}_w}:=\mathcal{T}_{w^{-1}}^{-1}; \quad  \overline{\mathcal{T}_w^\vee}:= (\mathcal{T}_{w^{-1}}^\vee)^{-1} \/. \end{equation} We extend the bar operation by requiring $\bar{y}=y^{-1}$. 
The next lemma shows that the Grothendieck--Serre duality $\calD$ corresponds to the Hecke involution.
\begin{prop}\label{prop:DT}
For any $w\in W$, 
\[ \overline{\mathcal{T}_w} = \calD\circ \calT_w\circ \calD \/, \]
as $K_T(pt)[y]$-linear endomorphisms of $K_T(X)[y]$.
\end{prop}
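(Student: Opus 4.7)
The plan is to reduce to simple reflections, then for each simple reflection verify the identity by a short calculation combining the quadratic Hecke relation with Grothendieck--Serre duality for the $\mathbb{P}^1$-bundle $p_i$.

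\textbf{Step 1 (Reduction to simple reflections).} Since $\mathcal{T}_i$ satisfies the braid relations, for a reduced expression $w = s_{i_1}\cdots s_{i_k}$ we have $\mathcal{T}_w = \mathcal{T}_{i_1}\cdots\mathcal{T}_{i_k}$ and $\mathcal{T}_{w^{-1}} = \mathcal{T}_{i_k}\cdots\mathcal{T}_{i_1}$, hence by the definition in \eqref{E:defbarT},
\[\overline{\mathcal{T}_w} = \mathcal{T}_{w^{-1}}^{-1} = \mathcal{T}_{i_1}^{-1}\cdots\mathcal{T}_{i_k}^{-1} = \overline{\mathcal{T}_{s_{i_1}}}\cdots\overline{\mathcal{T}_{s_{i_k}}}.\]
Because $\mathcal{D}^2 = \mathrm{id}$, one may insert $\mathcal{D}^2$ between consecutive factors:
\[\mathcal{D}\circ\mathcal{T}_w\circ\mathcal{D} = (\mathcal{D}\mathcal{T}_{i_1}\mathcal{D})\cdots(\mathcal{D}\mathcal{T}_{i_k}\mathcal{D}).\]
Hence it suffices to prove $\mathcal{D}\mathcal{T}_i\mathcal{D} = \mathcal{T}_i^{-1}$ for each simple reflection $s_i$.

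\textbf{Step 2 (Explicit form of $\mathcal{T}_i^{-1}$).} From the quadratic relation $(\mathcal{T}_i+\mathrm{id})(\mathcal{T}_i+y)=0$ of Proposition~\ref{prop:hecke-relations},
\[\mathcal{T}_i^{-1} = -y^{-1}\bigl(\mathcal{T}_i + \mathrm{id} + y\bigr) = -(y^{-1}+\mathcal{L}_{\alpha_i})\partial_i - \mathrm{id},\]
using the definition $\mathcal{T}_i = (1+y\mathcal{L}_{\alpha_i})\partial_i - \mathrm{id}$.

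\textbf{Step 3 (Computing $\mathcal{D}\mathcal{T}_i\mathcal{D}$).} Grothendieck duality for the smooth proper $\mathbb{P}^1$-bundle $p_i:G/B\to G/P_i$ gives $\mathcal{D}_{G/P_i}\circ (p_i)_* = (p_i)_*\circ\mathcal{D}_{G/B}$, while the identity $\omega_{G/B}^\bullet = p_i^*\omega_{G/P_i}^\bullet\otimes\omega_{p_i}^\bullet$ and flatness of $p_i$ yield $\mathcal{D}\circ p_i^* = (p_i^*\circ\mathcal{D})\otimes\omega_{p_i}^\bullet$. Since $\omega_{p_i} = \mathcal{L}_{\alpha_i}$ and $\dim p_i = 1$, the shift gives $\omega_{p_i}^\bullet = -\mathcal{L}_{\alpha_i}$ in K-theory, so
\[\mathcal{D}\circ\partial_i\circ\mathcal{D} = -\mathcal{L}_{\alpha_i}\cdot\partial_i.\]
Conjugation by $\mathcal{D}$ takes multiplication by $\mathcal{L}_\lambda$ to multiplication by $\mathcal{L}_{-\lambda}$ (via $\mathcal{D}(\mathcal{F}\otimes\mathcal{L}_\lambda) = \mathcal{D}(\mathcal{F})\otimes\mathcal{L}_{-\lambda}$) and multiplication by $y$ to multiplication by $y^{-1}$. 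Combining,
\[\mathcal{D}\mathcal{T}_i\mathcal{D} = (1+y^{-1}\mathcal{L}_{-\alpha_i})\cdot(-\mathcal{L}_{\alpha_i}\partial_i) - \mathrm{id} = -(\mathcal{L}_{\alpha_i}+y^{-1})\partial_i - \mathrm{id},\]
which matches the expression for $\mathcal{T}_i^{-1}$ obtained in Step 2.

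\textbf{Main obstacle.} The delicate point is Step 3: one must track the derived shifts carefully, since the sign in $\omega_{p_i}^\bullet = \omega_{p_i}[1] = -\mathcal{L}_{\alpha_i}$ is what ultimately converts $\partial_i$ into $-\mathcal{L}_{\alpha_i}\partial_i$. If one prefers to avoid Grothendieck duality altogether, the identity can alternatively be verified on the fixed point basis: Lemma~\ref{lemma:Diw} gives $\mathcal{D}(\iota_w)=\iota_w$, so both $\mathcal{T}_i^{-1}(\iota_w)$ (from Step 2 and Lemma~\ref{lem:actiononfixedpoint}(c)) and $\mathcal{D}\mathcal{T}_i\mathcal{D}(\iota_w)$ (obtained by substituting $e^{\lambda}\mapsto e^{-\lambda}$, $y\mapsto y^{-1}$ in the formula for $\mathcal{T}_i(\iota_w)$) become short rational expressions that one checks agree term by term in $K_T(G/B)_{\mathrm{loc}}[y,y^{-1}]$.
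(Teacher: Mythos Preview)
Your proof is correct. The reduction to simple reflections in Step~1 and the use of the quadratic relation in Step~2 match the paper exactly. Where you diverge is in Step~3: the paper verifies $\mathcal{T}_i^{-1}=\mathcal{D}\mathcal{T}_i\mathcal{D}$ by checking both sides on the fixed-point basis, using Lemma~\ref{lemma:Diw} ($\mathcal{D}(\iota_w)=\iota_w$) together with the explicit formula for $\mathcal{T}_i(\iota_w)$ from Lemma~\ref{lem:actiononfixedpoint}(c); this is precisely the alternative you sketch in your final paragraph. Your main argument instead computes $\mathcal{D}\partial_i\mathcal{D}=-\mathcal{L}_{\alpha_i}\partial_i$ directly from relative Grothendieck duality for the $\mathbb{P}^1$-bundle $p_i$, and then reads off the conjugate of $\mathcal{T}_i$ by tracking how $\mathcal{D}$ interacts with multiplication by $y$ and by line bundles. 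This is a more intrinsic argument---it avoids localization and explains structurally why the identity holds---whereas the paper's approach is quicker and entirely elementary once the fixed-point formulas are in hand. Both are valid; your route has the minor advantage of yielding the operator identity $\mathcal{D}\partial_i\mathcal{D}=-\mathcal{L}_{\alpha_i}\partial_i$ as a byproduct.
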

\begin{proof}  Observe that since the vector bundle duality is an involution, the composition $\calD\circ \calT_w\circ \calD$ is $K_T(pt)[y]$-linear. Moreover, since $\calD^2=id$, we only need to show that for any simple root $\alpha_i$,
\begin{equation}\label{equ:DT}
\calT^{-1}_{i}=\calD\circ \calT_{i}\circ \calD \/.
\end{equation} 
To this aim, it suffices to show that the operators agree on the basis of fixed points. 
By Lemma \ref{lemma:Diw}, Lemma \ref{lem:actiononfixedpoint}(c), and since $\calT_{i}^{-1}=-y^{-1}(\calT_{i}+1+y)$, we have 
\begin{align*}
\calD\circ \calT_{i}\circ \calD(\iota_w)&=\calD\circ \calT_{i}(\iota_w)\\
&=\calD\left(-\frac{1+y}{1-e^{-w\alpha_i}}\iota_{w}+\frac{1+ye^{-w\alpha_i}}{1-e^{-w\alpha_i}}\iota_{ws_{i}}\right)\\
&=-\frac{1+y^{-1}}{1-e^{w\alpha_i}}\iota_{w}+\frac{1+y^{-1}e^{w\alpha_i}}{1-e^{w\alpha_i}}\iota_{ws_{i}}\\
&=-y^{-1}(\calT_{i}+1+y)(\iota_w)\\
&=\calT_{i}^{-1}(\iota_w).
\end{align*}
This proves Equation \eqref{equ:DT} and finishes the proof of the lemma.
\end{proof}
\begin{rem} Similar localization techniques can be used to prove that for any $w \in W$, the following equalities of $K_T(pt)[y]$-linear endomorphisms of $K_T(X)[y]$ hold:\[ \calL_{2\rho}\circ \overline{\mathcal{T}_w^\vee} \circ\calL_{-2\rho}=\calD\circ \calT^\vee_w\circ \calD \] 
\[y^{-\ell(w)}\calL_{-\rho}\circ\calT_w\circ\calL_{\rho}= \calT^\vee_w|_{y\rightarrow y^{-1}} \/. \] {These are closely related to the the formulas for conjugating the algebraic Demazure-Lusztig operators $T_w$ as in \cite[\S 8]{brubaker.bump.licata}, in turn related to the study of certain specializations of non-symmetric Macdonald polynomials \cite{ion:nonsymmetric}.} \end{rem}

\section{Motivic Chern classes of Schubert cells and the Hecke operators}\label{sec:motivic}
We recall the definition of the motivic Chern classes, following \cite{brasselet.schurmann.yokura:hirzebruch} and \cite{AMSS19:motivic}. For now let $X$ be a quasi-projective, complex algebraic variety, with an action of a torus $T$. First we recall the definition of the (relative) motivic Grothendieck group ${G}_0^T(var/X)$ of varieties over $X$. This is the free abelian group generated by symbols $[f: Z \to X]$ for isomorphism classes of equivariant morphisms $f:Z \to X$, where $Z$ is a quasi-projective $T$-variety, modulo the usual additivity relations 
$$[f: Z \to X] = [f: U \to X] + [f:Z \setminus U \to X]$$ for $U \subset Z$ an open invariant subvariety.

The following theorem was proved by Brasselet, Sch{\"u}rmann and Yokura \cite[Theorem 2.1]{brasselet.schurmann.yokura:hirzebruch} in the non-equivariant case. Minor changes in arguments are needed in the equivariant case - see \cite{AMSS19:motivic,FRW:motivic}. 

\begin{thm}\label{thm:existence}\cite[Theorem 4.2]{AMSS19:motivic} Let $X$ be a quasi-projective, non-singular, complex algebraic variety with an action of the torus $T$. There exists a unique natural transformation $MC_y: G_0^T(var/X) \to K_T(X)[y]$ satisfying the following properties:
\begin{enumerate} \item[(1)] It is functorial with respect to $T$-equivariant proper morphisms of non-singular, quasi-projective varieties. 

\item[(2)] It satisfies the normalization condition \[ MC_y[id_X: X \to X] = \lambda_y(T^*_X) = \sum y^i [\wedge^i T^*_X]_T \in K_T(X)[y] \/. \]
\end{enumerate}
\end{thm}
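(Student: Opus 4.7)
The plan is to follow the strategy of Brasselet-Schürmann-Yokura \cite{brasselet.schurmann.yokura:hirzebruch} (non-equivariant case) together with its equivariant enhancements in \cite{AMSS19:motivic,FRW:motivic}. The central tool is the \emph{Bittner presentation} of the relative Grothendieck group: using equivariant resolution of singularities (Sumihiro's theorem combined with Hironaka-type arguments for quasi-projective $T$-varieties) together with the scissor relations, one shows that $G_0^T(var/X)$ is generated by symbols $[f: Z \to X]$ with $Z$ a smooth quasi-projective $T$-variety and $f$ a proper $T$-equivariant morphism, subject to the single blow-up relation
\begin{equation*}
[Z \to X] - [Y \to X] = [\widetilde{Z} \to X] - [E \to X],
\end{equation*}
where $Y \hookrightarrow Z$ is a smooth $T$-invariant closed subvariety, $\pi: \widetilde{Z} \to Z$ is its blow-up, and $j: E \hookrightarrow \widetilde{Z}$ is the exceptional divisor.

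For \emph{uniqueness}, property (2) determines $MC_y[\mathrm{id}_Z: Z \to Z] = \lambda_y(T^*_Z)$ for every smooth $Z$. Functoriality (1) applied to the factorization of a proper $f$ through $\mathrm{id}_Z$ then forces $MC_y[f: Z \to X] = f_*(\lambda_y(T^*_Z))$ on every Bittner generator, and the scissor reductions propagate this to all of $G_0^T(var/X)$.

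For \emph{existence}, define $\Phi[f: Z \to X] := f_*(\lambda_y(T^*_Z))$ on the generators. Properties (1) and (2) are then immediate from the definition. The main step is to verify that $\Phi$ respects the blow-up relation, i.e. that in $K_T(Z)[y]$ one has
\begin{equation*}
\lambda_y(T^*_Z) - i_*(\lambda_y(T^*_Y)) = \pi_*(\lambda_y(T^*_{\widetilde{Z}})) - (\pi \circ j)_*(\lambda_y(T^*_E)).
\end{equation*}
This identity is established from the short exact sequence $0 \to \pi^*\Omega_Z \to \Omega_{\widetilde{Z}} \to \Omega_{\widetilde{Z}/Z} \to 0$, the identification of $\Omega_{\widetilde{Z}/Z}$ in terms of the tautological bundle on $E \cong \mathbb{P}(N_{Y/Z})$, the multiplicativity of $\lambda_y$ in short exact sequences, and a Koszul-resolution computation on the projective bundle $E \to Y$ that rewrites $\pi_*\lambda_y(T^*_{\widetilde{Z}})$ as $\lambda_y(T^*_Z)$ plus an error term supported on $Y$.

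The main obstacle is the equivariant upgrade of this blow-up identity, which requires two ingredients: (a) the existence of equivariant resolutions and equivariant embedded desingularizations for quasi-projective $T$-varieties, guaranteeing that the Bittner presentation is valid $T$-equivariantly; and (b) the observation that all sheaves appearing in the Koszul and cotangent exact sequences carry canonical $T$-linearizations compatible with the identities in the non-equivariant argument. Both points are standard and are carried out in detail in \cite{AMSS19:motivic,FRW:motivic}; once they are in place, the argument of \cite{brasselet.schurmann.yokura:hirzebruch} transfers verbatim to the equivariant setting and produces the desired natural transformation.
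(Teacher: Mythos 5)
The paper itself does not prove this theorem---it is imported verbatim as \cite[Theorem 4.2]{AMSS19:motivic}, with the non-equivariant case attributed to Brasselet--Sch\"urmann--Yokura and the equivariant modifications to \cite{AMSS19:motivic,FRW:motivic}. Your outline correctly reproduces the strategy used in those references: uniqueness from the Bittner presentation together with the normalization on smooth proper generators, existence by verifying the blow-up relation for $\lambda_y$-classes via the relative cotangent sequence and the projective-bundle computation, and the equivariant upgrade via equivariant resolution/weak factorization (to have the Bittner presentation $T$-equivariantly) plus canonical $T$-linearizations of the sheaves involved; this matches the intended proof.
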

If $X$ is understood from the context, and $Y$ is any $T$-invariant subvariety $Y\subset X$, let $MC_y(Y):=MC_y[Y\hookrightarrow X]\in K_T(X)[y]$ denote the motivic Chern class associated to $Y$.

Return to the situation when $X=G/B$ and denote by $\lambda_y(w)$ the $\lambda_y$-class of the cotangent space $T^*_{e_w}(X)$ at the $T$-fixed point $e_w$. We have already seen that $\lambda_y(w):=\prod_{\alpha >0} (1+ y e^{w(\alpha)})$. 
It is proved in  \cite{AMSS19:motivic}, cf.~Theorem. 5.1, Theorem~6.2, and Theorem~8.1, that
\begin{equation}\label{equ:mot}
MC_y(X(w)^\circ)=\calT_{w^{-1}}(\iota_{id}); \quad \frac{\calD(MC_y(Y(w)^\circ))}{\lambda_y(T^*_X)} = \frac{1}{\lambda_y(w_0)} (\mathcal{T}_{w_0w}^\vee)^{-1}(\iota_{w_0}) \/. \end{equation} Further, as a consequence of the Poincar{\'e} duality for the K-theoretic stable envelopes \cite[Proposition 1]{okounkov2016quantum}, it is proved in \cite[Theorem 8.11]{AMSS19:motivic} that 
\begin{equation}\label{E:Poincare} \langle MC_y(X(u)^\circ), \frac{\calD(MC_y(Y(v)^\circ))}{\lambda_y(T^*_X)} \rangle = \delta_{u,v}(-y)^{\ell(v)-\dim X} \/.\end{equation} 
A natural question is what are the classes generated by the operators $\overline{\mathcal{T}}_w$ and $\mathcal{T}_w^\vee$ defined in \eqref{E:defbarT}. The next theorem gives the answer.
\begin{thm}\label{thm:Doperator} For any $w \in W$,
\[ \calD(MC_y(X(w)^\circ)) =\overline{\mathcal{T}_{w^{-1}}}(\iota_{id}); \quad \frac{MC_y(Y(w)^\circ)}{\lambda_y(T^*_X)} = \frac{1}{\lambda_y(w_0)} \mathcal{T}_{(w_0 w)^{-1}}^\vee(\iota_{w_0}) \/. \]
In particular, 
\[ \calD(MC_y(Y(w)^\circ)) = \overline{\mathcal{T}_{(w_0 w)^{-1}}} (\iota_{w_0}); \quad \frac{MC_y(X(w)^\circ)}{\lambda_y(T^*_X)} = \frac{1}{\lambda_y(id)} \mathcal{T}_{w^{-1}}^\vee(\iota_{id}) \/. \]
\end{thm}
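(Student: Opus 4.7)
The strategy is to derive the two main identities from \eqref{equ:mot} using the Grothendieck-Serre duality $\calD$ together with Proposition \ref{prop:DT}, and then to obtain the ``in particular'' pair by transporting via the $\Phi_{w_0}$-automorphism of $G/B$.

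The first main identity follows in one line: apply $\calD$ to $MC_y(X(w)^\circ) = \mathcal{T}_{w^{-1}}(\iota_{id})$ (first equation of \eqref{equ:mot}). By Proposition \ref{prop:DT}, $\calD \circ \mathcal{T}_{w^{-1}} = \overline{\mathcal{T}_{w^{-1}}} \circ \calD$, and by Lemma \ref{lemma:Diw}, $\calD(\iota_{id}) = \iota_{id}$, whence $\calD(MC_y(X(w)^\circ)) = \overline{\mathcal{T}_{w^{-1}}}(\iota_{id})$.

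For the second main identity, I rewrite the other equation of \eqref{equ:mot} as $\calD(MC_y(Y(w)^\circ)) = \frac{\lambda_y(T^*_X)}{\lambda_y(w_0)}(\mathcal{T}^\vee_{w_0 w})^{-1}(\iota_{w_0})$ and apply $\calD$ again. Expanding the right side uses: (i) $\calD(a \cdot F) = a^\vee \calD(F)$ for a vector bundle $a$ (via the projection formula for $\RHom$); (ii) the scalar identity $\calD(\lambda_y(w_0)) = y^{-\dim X} e^{2\rho} \lambda_y(w_0)$, verified directly from $w_0(R^+) = -R^+$; (iii) the $\mathcal{T}^\vee$-analog of Proposition \ref{prop:DT} noted in the remark following that proposition, which gives $\calD \circ (\mathcal{T}^\vee_{w_0 w})^{-1} \circ \calD = \calL_{2\rho} \circ \mathcal{T}^\vee_{(w_0 w)^{-1}} \circ \calL_{-2\rho}$; (iv) the relation $\lambda_{y^{-1}}(T_X) = y^{-\dim X} \calL_{-2\rho} \lambda_y(T^*_X)$; and (v) $\calL_{-2\rho}(\iota_{w_0}) = e^{2\rho} \iota_{w_0}$, since $w_0(-2\rho) = 2\rho$. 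The resulting $\calL_{\pm 2\rho}$, $y^{\pm \dim X}$, and $e^{\pm 2\rho}$ twists cancel cleanly, yielding $MC_y(Y(w)^\circ) = \frac{\lambda_y(T^*_X)}{\lambda_y(w_0)} \mathcal{T}^\vee_{(w_0 w)^{-1}}(\iota_{w_0})$; division by $\lambda_y(T^*_X)$ produces the claim.

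For the ``in particular'' pair I use the $G$-automorphism $\Phi_{w_0}: G/B \to G/B$. Its pullback $\Phi_{w_0}^*$, being $G$-equivariant, commutes with $\calD$, with the $G$-equivariant operators $\mathcal{T}_v$ and $\mathcal{T}_v^\vee$, and with multiplication by $G$-equivariant classes like $\lambda_y(T^*_X)$. Moreover by functoriality of $MC_y$ under equivariant isomorphisms, $\Phi_{w_0}^*(MC_y(X(w)^\circ)) = MC_y(Y(w_0 w)^\circ)$; also $\Phi_{w_0}^*(\iota_{id}) = \iota_{w_0}$ and the scalar $\lambda_y(w_0)$ maps to $\lambda_y(id)$ (a one-line check using $w_0(R^+) = -R^+$). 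Applying $\Phi_{w_0}^*$ to the two main identities and reindexing by $w \mapsto w_0 w$ produces the two ``in particular'' statements. The main obstacle is the bookkeeping in the second step: several twist factors from moving $\calD$ past $(\mathcal{T}^\vee)^{-1}$ and past the scalar $\lambda_y(w_0)$ must conspire to cancel, a phenomenon that reflects the link between the Grothendieck-Serre twist $\omega_X^\bullet = (-1)^{\dim X}\calL_{2\rho}$ and the $\lambda_y$-class at the fixed point $e_{w_0}$. An alternative verification by direct localization using Lemma \ref{lem:actiononfixedpoint} is available, but the present duality-based approach keeps the structural role of $\calD$ visible.
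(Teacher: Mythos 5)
Your derivation of the first main identity and your reduction of the ``in particular'' pair to the main pair via $\Phi_{w_0}^*$ are exactly what the paper does. However, your handling of the second main identity is a genuinely different route. The paper proves $MC_y(Y(w)^\circ)/\lambda_y(T^*_X) = \lambda_y(w_0)^{-1}\,\mathcal{T}^\vee_{(w_0w)^{-1}}(\iota_{w_0})$ by a pairing argument: it computes $\langle \overline{\mathcal{T}_{u^{-1}}}(\iota_{id}), \mathcal{T}^\vee_{(w_0v)^{-1}}(\iota_{w_0})\rangle$ using adjointness, Lemma \ref{lemma:prod}, and a triangularity/support argument, then matches it against the Poincar\'e-duality pairing \eqref{E:Poincare} and invokes non-degeneracy. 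You instead apply $\calD$ directly to the second formula of \eqref{equ:mot} and track the resulting twists by $\calL_{\pm 2\rho}$, $y^{\pm\dim X}$, and $e^{\pm 2\rho}$ until they cancel. I checked your bookkeeping: the factors from $\calD(1/\lambda_y(w_0)) = y^{\dim X}e^{-2\rho}/\lambda_y(w_0)$, from $\lambda_{y^{-1}}(T_X) = y^{-\dim X}\calL_{-2\rho}\lambda_y(T^*_X)$, and from $\calL_{-2\rho}\iota_{w_0} = e^{2\rho}\iota_{w_0}$ do cancel cleanly against the $\calL_{2\rho}$ coming from the $\mathcal{T}^\vee$ conjugation, giving the claimed identity. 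Your approach is shorter and makes the structural role of $\calD$ more visible, whereas the paper's route avoids one extra input. The one thing to be aware of: your step (iii) invokes the $\mathcal{T}^\vee$-analogue of Proposition \ref{prop:DT}, namely $\calL_{2\rho}\circ\overline{\mathcal{T}^\vee_w}\circ\calL_{-2\rho} = \calD\circ\mathcal{T}^\vee_w\circ\calD$, which the paper states in a remark following Proposition \ref{prop:DT} but does not prove (it only says the proof is by ``similar localization techniques''). Your argument therefore rests on an assertion the paper doesn't establish in detail; for a fully self-contained proof you should supply the short localization check for that remark, exactly in the spirit of the proof of Proposition \ref{prop:DT}. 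The paper's pairing argument sidesteps this dependency, at the cost of invoking \eqref{E:Poincare}, which is itself imported from the Poincar\'e duality for stable envelopes.
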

The proof will use the following lemma, proved in \cite[Proposition 3.5]{AMSS19:motivic}.
\begin{lemma}\label{lemma:prod} Let $u,v \in W$ be two Weyl group elements. Then \[ \mathcal{T}_u \cdot (\mathcal{T}_v)^{-1} = c_{uv^{-1}}\mathcal{T}_{uv^{-1}} + \sum_{w <uv^{-1}} c_w(y) \mathcal{T}_w \/, \] where $c_w(y)$ is a rational function in $y$. If $\ell(uv^{-1}) = \ell(u) + \ell(v^{-1})$, then $c_{uv^{-1}}(y) = (-y)^{-\ell(v)}$. 
\end{lemma}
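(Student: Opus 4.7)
The plan is to induct on $\ell(v)$, using only the Hecke relations of Proposition \ref{prop:hecke-relations}. The quadratic relation $\mathcal{T}_i^2 + (1+y)\mathcal{T}_i + y = 0$ rearranges to
\[
\mathcal{T}_i^{-1} = -y^{-1}\mathcal{T}_i - (1+y^{-1}),
\]
and combining this with the standard multiplication rules $\mathcal{T}_w\mathcal{T}_i = \mathcal{T}_{ws_i}$ if $ws_i > w$ and $\mathcal{T}_w\mathcal{T}_i = -y\mathcal{T}_{ws_i} - (1+y)\mathcal{T}_w$ if $ws_i < w$, a direct computation shows that in either case $\mathcal{T}_w\mathcal{T}_i^{-1}$ is a combination of $\mathcal{T}_w$ and $\mathcal{T}_{ws_i}$; explicitly,
\[
\mathcal{T}_w\mathcal{T}_i^{-1} = \begin{cases} -y^{-1}\mathcal{T}_{ws_i} - (1+y^{-1})\mathcal{T}_w & \text{if } ws_i > w,\\ \mathcal{T}_{ws_i} & \text{if } ws_i < w.\end{cases}
\]
This is the base case of the induction ($v = s_i$): when $us_i > u$ we read off $c_{us_i}(y) = -y^{-1} = (-y)^{-\ell(v)}$, as claimed.

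For the inductive step, fix a reduced factorization $v = s_i v'$ with $\ell(v') = \ell(v) - 1$, so $\mathcal{T}_v^{-1} = \mathcal{T}_{v'}^{-1}\mathcal{T}_i^{-1}$ and
\[
\mathcal{T}_u\mathcal{T}_v^{-1} = (\mathcal{T}_u\mathcal{T}_{v'}^{-1})\,\mathcal{T}_i^{-1}.
\]
The inductive hypothesis expands $\mathcal{T}_u\mathcal{T}_{v'}^{-1} = \sum_{w \le uv'^{-1}} c_w^{(v')}(y)\mathcal{T}_w$, and applying the boxed formula term by term produces an expansion of $\mathcal{T}_u\mathcal{T}_v^{-1}$ supported on the indices $\{w, ws_i : w \le uv'^{-1}\}$. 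Tracking the leading term, one first checks, under the length hypothesis $\ell(uv^{-1}) = \ell(u) + \ell(v^{-1})$, that $\ell(uv'^{-1}) = \ell(u) + \ell(v'^{-1})$ and that $uv^{-1} = (uv'^{-1})s_i$ with $(uv'^{-1})s_i > uv'^{-1}$: otherwise, we would get $\ell(uv^{-1}) \le \ell(uv'^{-1}) \le \ell(u)+\ell(v'^{-1}) < \ell(u)+\ell(v^{-1})$. Hence the inductive leading term $(-y)^{-\ell(v')}\mathcal{T}_{uv'^{-1}}$ is in the first case of the boxed formula, contributing
\[
(-y)^{-\ell(v')} \cdot (-y^{-1})\,\mathcal{T}_{uv^{-1}} = (-y)^{-\ell(v)}\,\mathcal{T}_{uv^{-1}}.
\]
No other term of the inductive expansion can contribute to $\mathcal{T}_{uv^{-1}}$: if $w < uv'^{-1}$ and $ws_i = uv^{-1} = (uv'^{-1})s_i$, then $w = uv'^{-1}$, a contradiction.

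The main obstacle is bounding the lower-order terms in the Bruhat order, namely showing every $\mathcal{T}_{w'}$ appearing in the final expansion satisfies $w' \le uv^{-1}$. For this I would invoke the lifting property of Bruhat order: since $(uv'^{-1})s_i > uv'^{-1}$, the inequality $w \le uv'^{-1}$ implies both $w \le uv^{-1}$ and $ws_i \le uv^{-1}$, so every index produced by the multiplication by $\mathcal{T}_i^{-1}$ stays within the required interval. The coefficients are manifestly rational functions of $y$, since each step introduces only denominators that are powers of $y$, closing the induction.
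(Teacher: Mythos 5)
Your computation of the leading coefficient under the length-additivity hypothesis is correct, and the base case and the boxed right-multiplication formula for $\mathcal{T}_w\mathcal{T}_i^{-1}$ are exactly right. (For what it's worth, the paper does not reprove this lemma at all; it simply cites \cite[Prop.~3.5]{AMSS19:motivic}, so there is nothing to compare against on that front.)

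There is, however, a genuine gap in your proof of the first assertion of the lemma --- the support bound $\mathcal{T}_u\mathcal{T}_v^{-1}=\sum_{w\le uv^{-1}}c_w\mathcal{T}_w$, which the lemma asserts for \emph{arbitrary} $u,v$. Your bounding paragraph invokes the lifting property under the assumption $(uv'^{-1})s_i>uv'^{-1}$, but you obtain that inequality only from the extra hypothesis $\ell(uv^{-1})=\ell(u)+\ell(v^{-1})$. Without it the inductive step can land in the opposite case $(uv'^{-1})s_i<uv'^{-1}$, i.e.\ $uv^{-1}<uv'^{-1}$, and then the naive support estimate $\{w,ws_i: w\le uv'^{-1}\}$ is typically \emph{not} contained in $\{w:w\le uv^{-1}\}$; the desired bound holds only because of coefficient cancellations that the term-by-term argument cannot see. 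A concrete instance in $S_3$: take $u=s_1s_2$, $v=s_2s_1$, so $v'=s_1$, $s_i=s_2$, $uv'^{-1}=w_0$, $uv^{-1}=s_2s_1<w_0$. The inductive expansion $\mathcal{T}_u\mathcal{T}_{v'}^{-1}=-y^{-1}\mathcal{T}_{w_0}-(1+y^{-1})\mathcal{T}_{s_1s_2}$ happens to have vanishing coefficients at $\mathcal{T}_{s_1}$ and $\mathcal{T}_{s_2s_1}$, which is what prevents terms like $\mathcal{T}_{s_1s_2}$ or $\mathcal{T}_{w_0}$ (both $\not\le s_2s_1$) from surviving after multiplying by $\mathcal{T}_{s_2}^{-1}$; nothing in your argument explains this. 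Moreover, one cannot sidestep the issue by first cancelling common right descents of $u$ and $v$ to force length-additivity: the same pair $u=s_1s_2$, $v=s_2s_1$ shares no right descent, yet $\ell(uv^{-1})=2<4=\ell(u)+\ell(v^{-1})$. The general support bound is essentially the triangularity of the Hecke bar involution (the $R$-polynomial statement) and needs a more careful argument than a single right-to-left peeling of $v$. Note also that the paper genuinely needs the general case in the proof of Theorem~\ref{thm:Doperator}, where $\langle(\mathcal{T}_{w_0v}\mathcal{T}_u^{-1})(\iota_{id}),\iota_{w_0}\rangle=0$ for $u\ne v$ is deduced from the support bound, and length-additivity of $(w_0v,u)$ typically fails there.
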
  
\begin{proof}[Proof of Theorem \ref{thm:Doperator}] The equalities on the second row follow from those on the first row after applying the automorphism obtained by left multiplication by $w_0$ (cf.~\S \ref{sec:flagv} above). To prove the first equality in the first row, observe that by Lemma \ref{lemma:Diw} and Proposition \ref{prop:DT},
\begin{equation}\label{E:pf1}\overline{\calT_{w^{-1}}}(\iota_{id})=\calD\circ \calT_{w^{-1}}\circ \calD(\iota_{id})= \calD(MC_y(X(w)^\circ)).\end{equation}
It remains to prove the second equality. We essentially rewrite the proof of \cite[Theorem 6.2]{AMSS19:motivic}, and for completeness we include the details. Observe that the class $\mathcal{T}_u(\iota_{id})$ is supported on the Schubert variety $X(u^{-1})$, thus its expansion into Schubert classes contains only classes $\cO_v$ where $v \le u^{-1}$. In addition, $\langle \cO_v, \iota_{w_0} \rangle = 0$ for $v<w_0$. Therefore, $\langle \mathcal{T}_u(\iota_{id}), \iota_{w_0} \rangle = 0$ for $u<w_0$. By adjointness,
\[ \langle \overline{\mathcal{T}_{u^{-1}}}(\iota_{id}), \mathcal{T}_{(w_0v)^{-1}}^\vee(\iota_{w_0}) \rangle = \langle  (\mathcal{T}_{w_0v} \cdot  (\mathcal{T}_{u})^{-1})(\iota_{id}), \iota_{w_0} \rangle \/. \]
From Lemma \ref{lemma:prod} the last quantity is nonzero only if $u=v$, and in this case, using for instance 
\cite[Proposition 7.1 (b)]{AMSS19:motivic} we obtain that 
\[ \begin{split} \langle  (\mathcal{T}_{w_0v} \cdot  (\mathcal{T}_{u})^{-1})(\iota_{id}), \iota_{w_0} \rangle  & = (-y)^{-\ell(u)} \langle \mathcal{T}_{w_0vu^{-1}} (\iota_{id}),\iota_{w_0} \rangle  \\ & = (-y)^{-\ell(u)} \langle \mathcal{T}_{w_0vu^{-1}} (\iota_{id}),\iota_{w_0} \rangle \\ & = (-y)^{-\ell(u)} \langle MC_y(X(w_0)^\circ), \iota_{w_0} \rangle \\ & = (-y)^{-\ell(u)}   MC_y(X(w_0)^\circ)_{|w_0} \\ & = (-y)^{-\ell(u)} \lambda_y(w_0) \/.\end{split} \] 
Combining the previous two equalities and the equation \eqref{E:pf1} we obtain that 
\[ \langle \mathcal{D}(MC_y(X(u)^\circ)), \frac{1}{\lambda_y(w_0)} \mathcal{T}_{(w_0 v)^{-1}}^\vee(\iota_{w_0}) \rangle = \delta_{u,v} (-y)^{-\ell(u)} \/. \] On the other side, observe that the left Weyl group multiplication commutes with the Grothendieck-Serre duality $\mathcal{D}$, it fixes both $y$ and $\lambda_y(T^*_X)$ (since $T^*_X$ is a $G$-equivariant bundle), and it satisfies $w_0. MC_y(Y(w)^\circ) = MC_y(X(w_0 w)^\circ)$. Then by the definition of the pairing and by Equation \eqref{E:Poincare} we obtain \[ \begin{split} \langle \mathcal{D}(MC_y(X(u)^\circ)), \frac{MC_y(Y(v)^\circ)}{\lambda_y(T^*_X)} \rangle  & =  \langle MC_y(Y(v)^\circ), \frac{\mathcal{D}(MC_y(X(u)^\circ))}{\lambda_y(T^*_X)} \rangle \\ & = \delta_{u,v} (-y)^{\ell(w_0 u) - \dim X} \\ & = \delta_{u,v} (-y)^{-\ell(u)} \/.\end{split} \] Combining the last two equations, and since the Poincar{\'e} pairing is non-degenerate, it follows that \[ \frac{MC_y(Y(v)^\circ)}{\lambda_y(T^*_X)} = \frac{1}{\lambda_y(w_0)} \mathcal{T}_{(w_0 v)^{-1}}^\vee(\iota_{w_0}) \/, \] as claimed. 
\end{proof}

\section{Character formulas}
In this section we prove our main result, Theorem \ref{thm:characters} below, and we deduce the 
`geometric' versions of the formulas relating the motivic Chern classes to the 
Iwahori-Whittaker functions for the Langlands dual group over a non-Archimedean local field. In \S \ref{sec:whittaker}
we recall the definitions of the latter, and we state the precise dictionary between the two contexts.
This gives a geometric interpretation of some results of Brubaker, Bump and Licata \cite{brubaker.bump.licata}.
  
\subsection{Euler characteristics}\label{sec:char}
For simplicity of notation, denote by \[ MC'_y(X(w)^\circ):=\lambda_y(id) \frac{MC_y(X(w)^\circ)}{\lambda_y(T^*_X)} = \prod_{\alpha >0} (1+ye^\alpha)\frac{MC_y(X(w)^\circ)}{\lambda_y(T^*_X)} \/. \]
Then, by Theorem \ref{thm:Doperator}, $MC_y'(X(w)^\circ) =\mathcal{T}_{w^{-1}}^\vee (\iota_{id})$. We now prove the main result of this note.
\begin{thm}\label{thm:characters}
For any $\lambda\in X^*(T)$ and $w\in W$, the following hold:
\begin{equation*}\label{equ:demazure}
\chi(X, \calL_\lambda\otimes \cO_w)=\widetilde{\partial_w}(e^\lambda),
\end{equation*}
\begin{equation*}\label{equ:whittaker1}
\chi\left(X, \calL_\lambda\otimes MC_y(X(w)^\circ)\right)=\widetilde{T^\vee_{w}}(e^\lambda),
\end{equation*}
and 
\begin{equation*}\label{equ:whittaker}
\chi\left(X, \calL_\lambda\otimes MC_y'(X(w)^\circ)\right)=\widetilde{T_{w}}(e^\lambda).
\end{equation*}
\end{thm}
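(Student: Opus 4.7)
The plan is to reduce each equality to an adjointness manipulation in the $K$-theoretic Poincar\'e pairing, combined with geometric identifications expressing the three classes as Demazure--Lusztig-type operators applied to the fixed-point class $\iota_{id}$.

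The first step is to write each class on the left as an operator applied to $\iota_{id}$. Since $X(id)=\{1.B\}$, we have $\cO_{id}=\iota_{id}$; iterating the recursion \eqref{equ:BGGonstru} along a reduced expression $w=s_{i_1}\cdots s_{i_k}$ gives
\[\cO_w=\partial_{i_k}\cdots\partial_{i_1}(\iota_{id})=\partial_{w^{-1}}(\iota_{id}).\]
The analogous identities for the motivic classes are already available: equation \eqref{equ:mot} gives $MC_y(X(w)^\circ)=\calT_{w^{-1}}(\iota_{id})$, and the last line of Theorem \ref{thm:Doperator}, combined with the definition of $MC'_y$, gives $MC'_y(X(w)^\circ)=\calT^\vee_{w^{-1}}(\iota_{id})$.

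The second step is to move the operator across the pairing. The Demazure operators $\partial_i$ are self-adjoint, while $\calT_i$ and $\calT^\vee_i$ are mutually adjoint; consequently the adjoint of $\partial_{w^{-1}}$ is $\partial_w$, the adjoint of $\calT_{w^{-1}}$ is $\calT^\vee_w$, and the adjoint of $\calT^\vee_{w^{-1}}$ is $\calT_w$. Applying this together with the definition \eqref{equ:tildeoperators} of the tilded operators on $K_T(pt)$ yields
\begin{align*}
\chi(X,\calL_\lambda\otimes\cO_w)&=\langle\calL_\lambda,\partial_{w^{-1}}(\iota_{id})\rangle=\langle\partial_w(\calL_\lambda),\iota_{id}\rangle=\widetilde{\partial_w}(e^\lambda),\\
\chi(X,\calL_\lambda\otimes MC_y(X(w)^\circ))&=\langle\calT^\vee_w(\calL_\lambda),\iota_{id}\rangle=\widetilde{T^\vee_w}(e^\lambda),\\
\chi(X,\calL_\lambda\otimes MC'_y(X(w)^\circ))&=\langle\calT_w(\calL_\lambda),\iota_{id}\rangle=\widetilde{T_w}(e^\lambda).
\end{align*}

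Since the substantive content sits in \eqref{equ:mot} and in Theorem \ref{thm:Doperator}, there is no serious obstacle remaining: the argument is essentially an adjointness rearrangement. The only point needing care is the bookkeeping that reversing a product of single-step operators corresponds to the inversion $w\mapsto w^{-1}$ on the Weyl group side, which is well-defined thanks to the braid relations of Proposition \ref{prop:hecke-relations}.
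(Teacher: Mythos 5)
Your proof is correct and takes essentially the same route as the paper: express each of the three classes as a (Demazure, $\calT$, or $\calT^\vee$) operator on $\iota_{id}$ via \eqref{equ:BGGonstru}, \eqref{equ:mot}, and Theorem~\ref{thm:Doperator}, then move the operator across the Poincar\'e pairing by adjointness and invoke \eqref{equ:tildeoperators}. The bookkeeping with $w\mapsto w^{-1}$ under adjointness is handled correctly, and the paper's proof does exactly this.
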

\begin{rem}
The methods in this paper do not give information about the individual cohomology groups $H^i(X,\mathcal{L}_\lambda \otimes \cO_w)$, so the first equation may be regarded as a virtual Demazure character formula. The usual formula requires that $\lambda$ is antidominant, in which case all higher cohomology groups vanish \cite{demazure:desingularisations,andersen:schubert,mehta.ramanathan}, and $H^0(G/B,\calL_\lambda)$ is the highest weight representation of $G$ with highest weight $w_0\lambda$. Thus, 
 \begin{equation}\label{equ:weyl}
  \chi(G/B,\calL_\lambda)=\widetilde{\partial_{w_0}}(e^\lambda)=\chi_{w_0\lambda},
 \end{equation}
where $\chi_{w_0\lambda}$ is the character of the highest weight representation.  
We regard $\mathcal{L}_\lambda \otimes \mathcal{O}_w$ as a class in $K_T(X)$, rather than restricting $\mathcal{L}_\lambda$ to the Schubert variety $X(w)$. Nevertheless, the K-theoretic push-forward of the inclusion map $i: X(w) \to X$ satisfies $i_*[\cO_{X(w)}] = \cO_w$ (since it is a closed embedding). Thus, $\chi(X(w), i^*\mathcal{L}_\lambda) = \chi(X,\calL_\lambda\otimes {\cO_w})$ by the projection formula. 

Similar considerations apply to the Euler characteristics from the second and third equations.
\end{rem}
\begin{proof}[Proof of Theorem \ref{thm:characters}] The proof relies on the fact that each of the Euler characteristics to be calculated coincide with a Poincar{\'e} pairing $\langle \mathcal{L}_\lambda, \kappa \rangle$, where $\kappa$ is a class given by a version of the Demazure-Lusztig operators. Then one can employ various adjointness properties of these operators to easily conclude the proof.

We start with the first equality. By formula \eqref{equ:BGGonstru}, and since the operators $\partial_i$'s are self adjoint, we have
\begin{align*}
\chi(X,\calL_\lambda\otimes {\cO_w})
&=\langle\calL_\lambda, \calO_{w}\rangle=\langle\calL_\lambda, \partial_{w^{-1}}(\iota_{id})\rangle\\
&=\langle\partial_w(\calL_\lambda), \iota_{id}\rangle\\
&=\widetilde{\partial_w}(e^\lambda),
\end{align*}
where the last equality follows from Equation \eqref{equ:tildeoperators}.

To prove the second equality, recall that the operators $\calT_i$ and $\calT^\vee_i$ are adjoint to each other. Hence,
\begin{align*}
\chi\left(X, \calL_\lambda\otimes MC_y(X(w)^\circ)\right)&=\langle \calL_\lambda, MC_y(X(w)^\circ)\rangle=\langle \calL_\lambda, \calT_{w^{-1}}(\iota_{id})\rangle\\
&=\langle \calT^\vee_w(\calL_\lambda), \iota_{id}\rangle\\
&=\widetilde{T^\vee_w}(e^\lambda),
\end{align*}
where the last equality follows from Equation \eqref{equ:tildeoperators}. The third equality can be proved in the same way, using that $MC'_y(X(w)^\circ) = \mathcal{T}_{w^{-1}}^\vee(\iota_{id})$, by Theorem \ref{thm:Doperator}.
\end{proof}

Summing over all the Weyl group elements, we get the following corollary.
\begin{cor}
For any $\lambda\in X^*(T)$, we have
\[\sum_w \widetilde{T^\vee_w}(e^\lambda)=\sum_w e^{w\lambda}\prod_{\alpha>0}\frac{1+ye^{w\alpha}}{1-e^{w\alpha}},\]
and 
\[\sum_w \widetilde{T_w}(e^\lambda)=\prod_{\alpha>0}(1+ye^\alpha)\partial_{w_0}(e^\lambda)=\prod_{\alpha>0}(1+ye^\alpha)\sum_w\frac{e^{w\lambda}}{\prod_{\alpha>0}(1-e^{w\alpha})}.\]
\end{cor}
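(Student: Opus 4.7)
The plan is to derive both identities by summing the character formulas in Theorem \ref{thm:characters} over all Weyl group elements and then converting the resulting Euler characteristic into an explicit sum via equivariant localization. The key algebraic input is the normalization property of motivic Chern classes (Theorem \ref{thm:existence}(2)): the Schubert cells $X(w)^\circ$ form a $T$-invariant stratification of $X$, so additivity in $G_0^T(var/X)$ combined with the normalization at $[id_X]$ gives
\[ \sum_{w \in W} MC_y(X(w)^\circ) = MC_y[id_X\colon X \to X] = \lambda_y(T^*_X) \/. \]
Dividing by $\lambda_y(T^*_X)$ and multiplying by the scalar $\lambda_y(id) \in K_T(pt)[y]$ then yields $\sum_w MC'_y(X(w)^\circ) = \lambda_y(id) = \prod_{\alpha >0}(1+ye^\alpha)$.

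For the first identity, summing \eqref{equ:whittaker1} produces
\[ \sum_w \widetilde{T^\vee_w}(e^\lambda) = \chi\bigl(X, \calL_\lambda \otimes \lambda_y(T^*_X)\bigr) \/. \]
I would apply Theorem \ref{thm:loc}(c) to the right-hand side, using $(\calL_\lambda)_{|e_w} = e^{w\lambda}$ and the standard fact that the weights of $T^*_{e_w}X$ are $\{w\alpha : \alpha > 0\}$, so that $\lambda_y(T^*_{e_w}X) = \prod_{\alpha>0}(1+ye^{w\alpha})$ and $\lambda_{-1}(T^*_{e_w}X) = \prod_{\alpha>0}(1-e^{w\alpha})$. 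This immediately gives $\sum_w e^{w\lambda}\prod_{\alpha>0}(1+ye^{w\alpha})/(1-e^{w\alpha})$.

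For the second identity, summing \eqref{equ:whittaker} and using the projection formula to pull the $T$-equivariant constant $\lambda_y(id)$ out of the Euler characteristic gives
\[ \sum_w \widetilde{T_w}(e^\lambda) = \prod_{\alpha > 0}(1+ye^\alpha)\cdot \chi(X, \calL_\lambda) \/. \]
Specializing \eqref{equ:demazure} to $w = w_0$ (and noting $X(w_0)=X$, so $\cO_{w_0} = \cO_X$) identifies $\chi(X,\calL_\lambda)$ with $\widetilde{\partial_{w_0}}(e^\lambda)$, which is the abbreviated $\partial_{w_0}(e^\lambda)$ in the statement; this gives the first of the two equalities. The second equality is just Theorem \ref{thm:loc}(c) applied to $\chi(X,\calL_\lambda)$ directly, i.e.~the Weyl character formula in fixed-point form. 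No serious obstacle arises; the only delicate point is the bookkeeping of the torus weights at each fixed point $e_w$ (and the observation that $\lambda_y(id)$, being a scalar in $K_T(pt)[y]$, factors out of the Poincar{\'e} pairing).
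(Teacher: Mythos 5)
Your proposal is correct and follows essentially the same route as the paper: sum the three character formulas of Theorem \ref{thm:characters} over $W$, use the additivity/normalization identity $\sum_w MC_y(X(w)^\circ)=\lambda_y(T^*_X)$ (and its $MC'_y$ variant), and evaluate the resulting Euler characteristics by the localization theorem. The one small addition you make — explicitly identifying $\chi(X,\calL_\lambda)$ with $\widetilde{\partial_{w_0}}(e^\lambda)$ via the $w=w_0$ case of \eqref{equ:demazure} — is a harmless clarification of notation that the paper leaves implicit.
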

\begin{proof}
Since $\sum_wMC_y(X(w)^\circ)=\lambda_y(T^*_X)$, using the localization formula \eqref{E:locy} we obtain
\begin{align*}
\sum_w \widetilde{T^\vee_w}(e^\lambda)&=\sum_w\chi\left(X, \calL_\lambda\otimes MC_y(X(w)^\circ)\right)\\
&=\chi(X, \calL_\lambda\otimes \lambda_y(T^*_X))\\
&=\sum_w e^{w\lambda}\prod_{\alpha>0}\frac{1+ye^{w\alpha}}{1-e^{w\alpha}}.
\end{align*}

Similarly, using $\sum_w MC'_y(X(w)^\circ) = \lambda_y(id)$, we obtain
\begin{align*}
\sum_w \widetilde{T_w}(e^\lambda)&=\sum_w\chi\left(X, \calL_\lambda\otimes MC'_y(X(w)^\circ)\right)\\
&=\prod_{\alpha>0}(1+ye^\alpha)\chi(X, \calL_\lambda)\\
&=\prod_{\alpha>0}(1+ye^\alpha)\sum_w\frac{e^{w\lambda}}{\prod_{\alpha>0}(1-e^{w\alpha})} \/.
\end{align*}
This finishes the proof. 
\end{proof}

\subsection{Iwahori-Whittaker functions}\label{sec:whittaker}
In this section we recall the definition of the Iwahori-Whittaker functions, following 
\cite{brubaker.bump.licata} and {\cite[\S 3]{brubaker2019colored}}. 
We utilize slightly different conventions: 
in the definition of the principal series representation, and 
of the Whittaker functionals, we make the opposite choices for the Borel subgroup, 
and of the unipotent radical. Our main statement is Corollary \ref{cor:geomwhit},
where we relate the Euler characteristics from 
Theorem \ref{thm:characters} above to the Iwahori-Whittaker functions from \cite[Thm.~1]{brubaker.bump.licata}.

Let $F$ be a non-Archimedean local field with ring of integers $\calO$. Let 
$\varpi$ be a uniformizer of $\calO$, and let $\bbF_q$ be the residue field of $\calO$. 
Let $\hat{G}$ be the split reductive 
Langlands dual group of $G$ over $F$. Let $\hat{B}$ and $\hat{T}$ be the 
corresponding dual Borel subgroup and the dual maximal torus, respectively. Let $\hat{N}$ be the unipotent radical of $\hat{B}$, and let $\hat{B}^-$ be the opposite Borel subgroup. 
There is an identification of the
original complex torus $T=X^*(\hat{T})\otimes_{\bbZ} \bbC^*$, where $X^*(\hat{T})$ is the group of characters of $\hat{T}$. This induces a pairing
\[\hat{T}(F)/\hat{T}(\calO)\times T\rightarrow \bbC^*,\quad (t\hat{T}(\calO),\lambda\otimes z)\mapsto z^{\val(\lambda(t))},\]
where $\lambda\in X^*(\hat{T})$, $z\in \bbC^*$ and $\val$ is the valuation of $F$ determined by $\varpi$. 
In turn, the pairing determines a bijection between unramified character of $\hat{T}(F)$ 
(i.e., characters which are trivial on 
$\hat{T}(\calO)$) and the complex torus $T$. For any unramified character $\tau$ of $\hat{T}(F)$, 
the principal series representation of $\hat{G}(F)$ is defined by
\[I(\tau):=\Ind_{\hat{B}^-(F)}^{\hat{G}(F)}\tau=\{\textit{locally constant fuctions }f:
\hat{G}(F)\rightarrow\bbC\mid f(bg)=(\delta^{\frac{1}{2}}\tau)(b)f(g), b\in \hat{B}^-(F),\}\]
where $\delta:\hat{B}^-(F)\rightarrow \bbR^*$ is the modular character, and $\tau$ is extended trivially from $\hat{T}(F)$ to $\hat{B}^-(F)$.
The group $\hat{G}(F)$ acts on $I(\tau)$ from the right; denote this action by $\pi$. 
Then $\pi(g)f(x)=f(xg)$, where $x,g\in \hat{G}(F)$. {Assume in addition that $\tau$ is regular, i.e.~the stabilizer $W_\tau = 1$; in this case $I(\tau)$ is irreducible.}

Let $I\subset \hat{G}(\calO)$ be the Iwahori subgroup, which is the preimage of $\hat{B}(\bbF_q)$ under the reduction map $\hat{G}(\calO)\rightarrow \hat{G}(\bbF_q)$. Utilizing the decomposition
$\hat{G}(F)=\sqcup_{w\in W}\hat{B}^-(F)wI$ one may deduce that
the Iwahori-invariant subspace $I(\tau)^I$ has a basis $\{\Phi_w^\tau \mid w\in W\}$, 
where $\Phi_w^\tau : \hat{G}(F) \to \bbC$ is defined by 
\[\Phi_w^\tau(bw'i)=\begin{cases} (\delta^{\frac{1}{2}}\tau)(b) & \textit{if } w=w',\\
0 & \textit{otherwise,}\end{cases}\]
for any $b\in \hat{B}^-(F)$, $w'\in W$ and $i\in I$. This basis is called the {\em standard basis}. 
{An identification of the Hecke modules 
$K_{T}(G/B)[y,y^{-1}] \otimes_{K_{T}(pt)[y,y^{-1}]} \bbC_\tau \simeq I(\tau)^I $, 
where $y=-q^{-1}$, and sending 
$\lambda_y(w_0) \frac{\calD(MC_{y}(Y(w)^\circ))}{\lambda_y (T^*X)} \otimes 1$ to $\Phi_w^\tau$,
was proved in \cite[Thm. 10.2]{AMSS19:motivic}.}

Finally, let $x_\alpha:\bbG_a\rightarrow \hat{G}$ denote the one-parameter subgroup
determined by a root $\alpha$ of $(G,T)$. Pick any character $\psi$ of $\hat{N}(F)$, 
such that for any positive root $\alpha$, the restriction of $\psi\circ x_\alpha: F\rightarrow \bbC^*$
is trivial on $\calO$, but not on larger fractional ideals. A {\em Whittaker functional} 
is a linear map $\Omega_\tau:I(\tau)\rightarrow \bbC$ such that 
$\Omega_\tau(\pi(n)f)=\psi(n)\Omega_\tau(f)$ for any $n\in \hat{N}(F)$. 
For fixed $\tau$, the vector space of Whittaker functionals is one-dimensional \cite{rodier1972modele}, 
therefore it suffices to consider the following explicit Whittaker functional:
\[\Omega_\tau(f):=\int_{\hat{N}(F)}f(n)\psi(n)^{-1} dn.\]
 
The \textit{Iwahori-Whittaker functions} are obtained by applying the Whittaker functional $\Omega_\tau$ 
to the right-translates of the standard basis elements $\Phi_w^\tau$.
{Recall that the character lattice $X^*(T)$ of
$T$ is isomorphic to the group of cocharacters $X_*(\hat{T})$ of $\hat{T}$. For a character $\lambda \in X^*(T)$,
define $\varpi^\lambda$ to be image of the uniformizer $\varpi$ under the cocharacter 
corresponding to $\lambda$. 
We are interested in the Iwahori-Whittaker functions:
\[\calW_{w}(\tau, g):=\delta^{\frac{1}{2}}(g)\Omega_{\tau^{-1}}(\pi(g)\Phi_w^{\tau^{-1}}),\] where 
$g \in \hat{G}(F)$. As explained in \cite[\S 3]{brubaker2019colored}, this function is determined by its 
values on elements of the form $g= \varpi^\lambda w'$, where $w' \in W$. As in \cite{brubaker.bump.licata},
we will consider the special case when $w'=id$. Further, we fix $\lambda \in X^*(T)$ and $w \in W$, and denote
the resulting function by $\calW_{\lambda,w}(\tau):= \calW_{w}(\tau,  \varpi^{-\lambda})$; under identifications above,
we regard this as a function on $\tau \in T$.} 
\begin{prop}\cite{brubaker.bump.licata,casselman1980unramifiedII}\label{prop:bbl}
The following identities hold as functions on {$T$}:
\begin{enumerate}
\item
For any {$\lambda \in X^*(T)$}
and $w\in W$, $\calW_{\lambda,w}=0$ unless $\lambda$ is {antidominant}.
\item 
{For any antidominant character $\lambda$,}
$\calW_{\lambda,id}=e^\lambda$.
\item
For any $w\in W$ and simple reflection $s_i$, if $s_iw>w$, then
\[\calW_{\lambda,s_iw}=\widetilde{T_i}|_{y\mapsto -q^{-1}}(\calW_{\lambda,w}),\]
where $\widetilde{T_i}$ is defined in Corollary \ref{cor:operators}. In particular,
\[\calW_{\lambda,w}=\begin{cases}\widetilde{T_w}|_{y\mapsto -q^{-1}}(e^\lambda) &\textit{if }\lambda \textit{ is antidominant,}\\
0 &\textit{otherwise.}\end{cases}\]
\item 
For any antidominant {character} 
$\lambda$,
\begin{equation}\label{equ:cs}
\sum_w\calW_{\lambda,w}=\prod_{\alpha>0}(1-q^{-1}e^\alpha)\chi_{w_0\lambda} \/.
\end{equation}
\end{enumerate}
\end{prop}
\begin{rem}
The function $\sum_w  {\calW_{\lambda,w}}$
is called the spherical Whittaker function, and Equation \eqref{equ:cs} is called the Casselman-Shalika formula \cite{casselman1980unramifiedII}.
\end{rem}
\begin{proof}[Proof of Proposition \ref{prop:bbl}] 
The first three properties follow from \cite[Lemma 2 and Theorem 1]{brubaker.bump.licata}. By \cite[Theorem 4]{brubaker.bump.licata}, 
\[\sum_w  {\calW_{\lambda,w}}
=\prod_{\alpha>0}(1-q^{-1}e^\alpha){\widetilde{\partial_{w_0}}(e^\lambda)},\]
where $\widetilde{\partial_{w_0}}$ is defined in Corollary \ref{cor:operators}. 
Therefore, the last property follows from Equation \eqref{equ:weyl}.
\end{proof}
 
Combining with Theorem \ref{thm:characters}, we obtain the following geometric interpretation of the Iwahori-Whittaker function; see also \cite{reeder:padic,su2017k} for related interpretations. 
\begin{cor}\label{cor:geomwhit}
For any antidominant weight $\lambda$ of $T$ and any $w\in W$,
{
\[ \calW_{\lambda,w}=
\chi \left(G/B, \calL_\lambda\otimes MC_{-q^{-1}}'(X(w)^\circ)\right) \/.\]} 
In particular, summing over the Weyl group elements, we get the Casselman-Shalika formula \eqref{equ:cs}
\[\sum_w {\calW_{\lambda,w} }
=\prod_{\alpha>0}(1-q^{-1}e^\alpha)\chi (G/B,\calL_\lambda)\/.\]
\end{cor}

\appendix
\section*{Appendix.  On a formula of Aky{\i}ld{\i}z and Carrell}
\begin{center} 
	with David Anderson
\end{center} 
\label{app.poincare}

\bigskip

\setcounter{equation}{0}
\renewcommand{\theequation}{A.\arabic{equation}}

\setcounter{thm}{0}
\renewcommand{\thethm}{A.\arabic{thm}}

In this appendix, we give a product formula for the Poincar\'e polynomial of a very special type of projective manifold $X$.  Our main examples are smooth Schubert varieties in a (Kac-Moody) complete flag variety $G/B$. In this context, the formula has been reproved many times: when $X=G/B$ is a finite-dimensional flag variety, it was proved by Kostant \cite{kostant1959principal}, Macdonald \cite{macdonald:poincare}, Shapiro and Steinberg \cite{steinberg1959finite}; a refinement to include the case where $X=X(w)$ is a smooth Schubert variety was given by Aky{\i}ld{\i}z and Carrell \cite{akyildiz.carrell:Betti}.  Our aim is to show how this formula fits into the framework of motivic Chern classes.  
Our setup is similar to that of \cite{akyildiz.carrell:Betti}, but where these authors use the language of `$\mathfrak{B}$-regular varieties' and results about the cohomology rings of such spaces, we will deduce the formula directly from basic properties of motivic Chern classes from Theorem \ref{thm:existence} and \cite[ Thm. 4.2]{AMSS19:motivic}, 
using $\mathbb{G}_m$-equivariant localization.

Let $X$ be a $d$-dimensional nonsingular projective variety over an algebraically closed field of characteristic $0$, and suppose $\mathbb{G}_m$ acts with finitely many fixed points.  By the Bia{\l}ynicki-Birula decomposition  \cite{BB} (see also \cite[\S3.1]{Brion}), 
\[
X=\coprod_{p\in X^{\bbG_m}}U(p),
\]
where each cell $U(p)$ is the attracting set of the fixed point $p$, and is isomorphic to some affine space, $U(p)\cong \mathbb{A}^{\ell(p)}$.  For each fixed point $p\in X^{\mathbb{G}_m}$, the weights of $\mathbb{G}_m$ acting on $T^*_pX$ form a multi-set of nonzero integers
\[
  N(p) = \{ n_1(p),\ldots,n_d(p) \}.
\]
It follows from the decomposition that there is a unique ``lowest'' fixed point $p_\circ \in X^{\mathbb{G}_m}$ such that the corresponding cell $U(p_\circ)$ is Zariski open in $X$.  At this point, the weights set $N(p_\circ)$ consists of {\em positive} integers; we will write
\[
n_i:=n_i(p_\circ)
\]
for these weights.  (Likewise, there is a unique ``highest'' fixed point $p_\infty$ such that $N(p_\infty)$ consists of negative integers, but we will not need this.)

We make the following (very restrictive) assumption:

\renewcommand{\theenumi}{\fnsymbol{enumi}}
\begin{enumerate}
\item For each fixed point $p\neq p_\circ$, we have $-1\in N(p)$. \label{app:cond}
\end{enumerate}

Let $A_*(X)_{\mathbb{Q}}$ denote Chow groups (with rational coefficients),  $b_i(X) = \dim A_i(X)_{\mathbb{Q}}$ the $i$th Betti number, and $P(X,q) = \sum_{i=0}^d b_i(X)\,q^i$ the Poincar\'e polynomial.  
%
%
Here is our formula.

\begin{thm}\label{thm:app}
Let $X$ be a nonsingular projective variety, with $\mathbb{G}_m$ acting with finitely many fixed points, satisfying \eqref{app:cond}.  Then
\[
  P(X,q) = \sum_{p\in X^{\bbG_m}} q^{\ell(p)} = \prod_{i=1}^d \frac{ 1 - q^{n_i+1} }{ 1 - q^{n_i} }.
\]
\end{thm}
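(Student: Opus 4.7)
The plan is to compute $\chi(X, \lambda_y(T^*_X))$ in two different ways and then exploit the hypothesis $-1 \in N(p)$ for $p \neq p_\circ$ by specializing at a carefully chosen value of the torus coordinate. First, the equality $P(X,q) = \sum_p q^{\ell(p)}$ is immediate from the Bia{\l}ynicki-Birula decomposition, since the closures $\overline{U(p)}$ provide a basis for $A_*(X)_\mathbb{Q}$ with $[\overline{U(p)}] \in A_{\ell(p)}(X)_\mathbb{Q}$.

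For the product formula, I would observe that the same cell decomposition endows $X$ with a pure Hodge structure concentrated in $(p,p)$-type classes, so that $H^q(X, \Omega^p) = 0$ unless $p = q$, with $\dim H^p(X, \Omega^p) = b_{2p}(X)$. Hence the non-equivariant Hirzebruch genus, which equals $\chi(X, \lambda_y(T^*_X))$ by the normalization property of motivic Chern classes, satisfies $\chi_y(X) = \sum_p (-y)^p b_{2p}(X) = P(X,-y)$. Moreover, since $\mathbb{G}_m$ is connected it acts trivially on singular cohomology $H^*(X, \mathbb{C})$, hence on each Hodge component $H^{p,q}(X) = H^q(X, \Omega^p)$. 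Consequently the equivariant Euler characteristic $\chi^{\mathbb{G}_m}(X, \lambda_y(T^*_X))$ is constant in the torus coordinate $z$ and equals $P(X,-y)$. On the other hand, the localization formula \eqref{E:locy} yields
\[ \chi^{\mathbb{G}_m}(X, \lambda_y(T^*_X)) = \sum_{p \in X^{\mathbb{G}_m}} \prod_{i=1}^d \frac{1 + y\, z^{n_i(p)}}{1 - z^{n_i(p)}}, \]
an identity of rational functions in $z$ whose left-hand side is now known to be the constant $P(X,-y)$.

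Finally, I would specialize $z = -y$. For each $p \neq p_\circ$, the hypothesis produces some $n_i(p) = -1$, and the corresponding factor $\tfrac{1 + yz^{-1}}{1 - z^{-1}} = \tfrac{z+y}{z-1}$ vanishes at $z = -y$, wiping out the entire product for such $p$. Only the $p_\circ$ term survives, and using $1 + y(-y)^{n_i} = 1 - (-y)^{n_i+1}$ one gets
\[ P(X,-y) = \prod_{i=1}^d \frac{1 - (-y)^{n_i+1}}{1 - (-y)^{n_i}}, \]
and setting $q = -y$ yields the theorem. The main obstacle is establishing that $\chi^{\mathbb{G}_m}(X, \lambda_y(T^*_X))$ is constant in $z$: this requires both the pure $(p,p)$-type Hodge structure supplied by the Bia{\l}ynicki-Birula decomposition and the triviality of the connected group $\mathbb{G}_m$ on singular cohomology. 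Once that constancy is in hand, the vanishing argument at $z = -y$ is a short computation and the motivic Chern class framework does the rest.
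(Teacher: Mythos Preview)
Your argument is correct and follows the same overall architecture as the paper's proof: compute $\chi(X,\lambda_y(T^*_X))$ in two ways, once to identify it with $P(X,-y)$ and once via equivariant localization, then specialize the torus variable to $-y$ so that only the $p_\circ$ term survives. The one genuine difference is in how you establish $\chi^{\mathbb{G}_m}(X,\lambda_y(T^*_X))=P(X,-y)$ and its constancy in the torus variable. The paper does this inside the motivic Chern class framework: using additivity and the lemma $\int_X MC_y(\mathbb{A}^\ell)=(-y)^\ell$, one gets $\int_X \lambda_y(T^*_X)=\sum_p(-y)^{\ell(p)}$ directly, which visibly lies in $\mathbb{Z}[y]$. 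You instead invoke Hodge theory: the cell decomposition forces $H^q(X,\Omega^p)=0$ for $p\neq q$, and connectedness of $\mathbb{G}_m$ makes its action on each $H^p(X,\Omega^p)$ trivial. Both routes are short and standard; the paper's version stays closer to the motivic Chern class formalism emphasized in the main text and avoids appealing to Hodge decomposition, while your version makes the constancy in $z$ conceptually transparent without needing the explicit cell computation of Lemma~\ref{lemma:pt}.
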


The idea of the proof is to calculate the Poincar{\'e} polynomial in two ways: on one side this is the same as the integral of the $\lambda_y$ class of $X$, and on the other side the $\lambda_y$ class can be calculated by equivariant localization.

We start with the following simple observation, which is implicit in \cite{AMSS19:motivic}; see also \cite[\S 3.1]{maxim.schurmann:toric}.
 
\begin{lemma}\label{lemma:pt}
Let $U \subseteq X$ be locally closed subset which is isomorphic to an affine cell, $U\cong \mathbb{A}^\ell$. Then $\int_{X} MC_y(U) = (-y)^{\ell}$.
\end{lemma}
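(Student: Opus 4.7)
The plan is to use naturality of the motivic Chern class transformation with respect to the proper morphism $a_X:X\to\mathrm{pt}$ (which is proper since $X$ is projective) to reduce to a calculation of $MC_y$ on a point of a projective space, which is then handled by the scissor (additivity) relations together with the normalization property on $\mathbb{P}^k$.

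First, I would invoke the functoriality property of $MC_y$ from Theorem~\ref{thm:existence}(1) applied to $a_X$, together with the definition of $\int_X$ as pushforward to $\mathrm{pt}$, to obtain
\[ \int_X MC_y([U\hookrightarrow X]) \;=\; (a_X)_*\, MC_y([U\hookrightarrow X]) \;=\; MC_y([U\to\mathrm{pt}]) \;\in\;\mathbb{Z}[y]. \]
In particular, the left-hand side is insensitive to the ambient $X$ and to the chosen embedding of $U$; it depends only on the class $[U\to\mathrm{pt}]=[\mathbb{A}^\ell\to\mathrm{pt}]$ in $G_0(\mathrm{var}/\mathrm{pt})$.

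Next, I would apply the scissor relation associated to the stratification $\mathbb{P}^\ell=\mathbb{A}^\ell\sqcup\mathbb{P}^{\ell-1}$, which in $G_0(\mathrm{var}/\mathrm{pt})$ reads
\[ [\mathbb{A}^\ell\to\mathrm{pt}] \;=\; [\mathbb{P}^\ell\to\mathrm{pt}]-[\mathbb{P}^{\ell-1}\to\mathrm{pt}]. \]
For each projective space, naturality applied to the proper map $\mathbb{P}^k\to\mathrm{pt}$ combined with the normalization $MC_y[\mathrm{id}_{\mathbb{P}^k}]=\lambda_y(T^*_{\mathbb{P}^k})$ yields
\[ MC_y([\mathbb{P}^k\to\mathrm{pt}]) \;=\; \chi\bigl(\mathbb{P}^k,\lambda_y(T^*_{\mathbb{P}^k})\bigr) \;=\; \sum_{j=0}^k(-y)^j, \]
the last equality being the standard computation of the Hirzebruch $\chi_y$-genus of $\mathbb{P}^k$ via $H^i(\mathbb{P}^k,\Omega^j)=\mathbb{C}$ for $i=j\leq k$ and zero otherwise. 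Subtracting the two telescoping sums produces $(-y)^\ell$, as claimed.

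There is no substantial obstacle in this strategy; the only point of vigilance is the passage between equivariant and non-equivariant theories. In the appendix's setting $X$ carries a $\mathbb{G}_m$-action and one naturally works with the equivariant version of $MC_y$, but since the right-hand side of the lemma is a scalar in $\mathbb{Z}[y]$, both the naturality and normalization steps may be performed after forgetting the action, where the identity on $\mathbb{P}^k$ above is valid.
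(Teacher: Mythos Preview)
Your argument is correct and shares the paper's first step---using functoriality of $MC_y$ along the proper map $X\to\mathrm{pt}$ to reduce to computing $MC_y[\mathbb{A}^\ell\to\mathrm{pt}]$---but diverges in how that quantity is evaluated. The paper invokes compatibility of $MC_y$ with \emph{exterior products} (citing \cite{brasselet.schurmann.yokura:hirzebruch} and \cite{AMSS19:motivic}) to factor $MC_y[(\mathbb{A}^1)^\ell\to\mathrm{pt}]=\bigl(MC_y[\mathbb{A}^1\to\mathrm{pt}]\bigr)^\ell$, and then only needs the single scissor relation $\mathbb{A}^1=\mathbb{P}^1\setminus\{\mathrm{pt}\}$. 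You instead apply the scissor relation $\mathbb{A}^\ell=\mathbb{P}^\ell\setminus\mathbb{P}^{\ell-1}$ directly and appeal to the full list of Hodge numbers of $\mathbb{P}^k$. Your route trades the exterior-product property (an external input) for a slightly larger but entirely classical cohomological computation; both are short and valid.

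On the equivariant point you flag: your phrasing ``may be performed after forgetting the action'' is a little loose, since one ultimately wants the \emph{equivariant} integral to equal $(-y)^\ell$ in $K_{\mathbb{G}_m}(\mathrm{pt})[y]$, not merely its image in $\mathbb{Z}[y]$. This is easily patched in your framework: a linear $\mathbb{G}_m$-action on $\mathbb{A}^\ell$ extends to $\mathbb{P}^\ell$, and since $\mathbb{G}_m$ is connected it acts trivially on each $H^i(\mathbb{P}^k,\Omega^j)$, so the equivariant $\chi_y$-genus of $\mathbb{P}^k$ is literally $\sum_{j=0}^k(-y)^j$ in $R(\mathbb{G}_m)[y]$. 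The paper handles the analogous issue with an equally brief parenthetical remark, so you are not being held to a different standard.
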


\begin{proof} By functoriality, the integral equals $MC_y[U \to pt]= MC_y[(\mathbb{A}^1)^{\ell} \to pt]$.  
We now use the fact that the motivic Chern transformation commutes with exterior products by \cite[Corollary 2.1]{brasselet.schurmann.yokura:hirzebruch} in the non-equivariant case and \cite[ Thm. 4.2]{AMSS19:motivic} equivariantly. 
Then
\[
 MC_y[(\mathbb{A}^1)^{\ell} \to pt]= MC_y[\mathbb{A}^1 \to pt]^{\ell} = (-y)^{\ell} \/, 
\]
where the last equality follows from an explicit calculation of $MC_y[\mathbb{A}^1 \to \pt] = MC_y[\mathbb{P}^1 \to \pt]- MC_y[\pt \to \pt]$.  (The result is independent of the equivariant parameters, even though the classes involved depend on them.)
\end{proof} 

\begin{proof}[Proof of Theorem \ref{thm:app}]
The Bia{\l}ynicki-Birula decomposition makes $X$ into a scheme with a cellular decomposition.  Then its Poincar{\'e} polynomial is given by:
\[
 P(X,q)=\sum_{p\in X^{\bbG_m}} q^{\ell(p)};
\]
see, e.g., \cite[\S3, Corollary~1(iii)]{Brion}.  
By lemma \ref{lemma:pt} and additivity of motivic Chern classes it follows that the integral of the $\lambda_y$ class of $X$ can be calculated as
\[
 \int_X \lambda_y(T^*_X) = MC_y[X \to \pt] = \sum_{p} MC_y[U(p) \to \pt] = \sum_{p} (-y)^{\ell(p)} \/.
\]
Hence, $P(X,q)=\int_X \lambda_{-q}(X)$. On the other hand, since $X$ is smooth, we may compute this integral by localization. Writing $i_p: p \hookrightarrow X$ for the inclusion of a fixed point, we have
\[
 i_p^* MC_y[id: X \to X] = \lambda_y(T^*_{p}X) \/, 
\]
where $T^*_{p}X$ is the cotangent space at $p$.  Then, by Theorem \ref{thm:loc},
\begin{equation}\label{E:2}
\int_{X}\lambda_y(T^*_{X})= \sum_{p \in X^{\mathbb{G}_m}} \frac{\lambda_y(T^*_{p} X)}{\lambda_{-1} (T^*_{p} X)} = \sum _{p \in X^{\mathbb{G}_m}} \prod_{i=1}^d\frac{1+y q^{n_i(p)}}{1-q^{n_i(p)}}  
\end{equation}
in $K_{\mathbb{G}_m}(pt)[y] = \bbZ[q^{\pm 1},y]$, writing $q$ for a coordinate on $\mathbb{G}_m$.

Since $\int_{X}\lambda_y(T^*_{X})\in \bbZ[y]$ does not depend on $q$, we may compute it by any specialization of $q$.  Under the specialization $q=-y$,  only the term corresponding to $p=p_\circ$ survives on the right-hand side of \eqref{E:2}, since for $p\neq p_\circ$, there is some $n_i(p)=-1$.  The $p_\circ$ term specializes to
\[
 \prod_{i=1}^d\frac{1- q^{n_i+1}}{1-q^{n_i}},
\]
and the theorem follows.
\end{proof}

The requirement \eqref{app:cond} is quite restrictive: for instance, in the case $d=2$, $X$ must be either $\mathbb{P}^2$ or a rational ruled surface.  (One checks that for $0<a\leq b$, the rational function $[(1-q^{a+1})(1-q^{b+1})]/[(1-q^a)(1-q^b)]$ is a polynomial if and only if $(a,b)$ is one of $(1,1)$, $(1,2)$, or $(2,3)$, and the last cannot occur for a projective surface.)

However, Schubert varieties are examples.  Here we consider a Kac-Moody group $G$, with Borel subgroup $B$ and maximal torus $T$, and corresponding simple roots $\{\alpha_i\}$.  If $\beta=\sum m_i \alpha_i$ is any root, its {\it height} is
\[
  \h(\beta) := \sum m_i .
\]
Continuing notation from the main part of the paper, for each element $w$ of the Weyl group $W$, we have Schubert cells $X(w)^\circ = BwB/B \cong \mathbb{A}^{\ell(w)}$, and Schubert varieties $X(w)=\overline{X(w)^\circ}$.  Finally, for any $v\leq w$, we write $e_v \in X(w)$ for the corresponding $T$-fixed point.  (Our reference for Kac-Moody groups is Kumar's book \cite{KumarKM}.  The hypothesis that $G$ be reductive, imposed in the main part of the paper for Iwahori-Whittaker theory, is not required here.)

\begin{cor}\label{cor:app}
Let $X(w) \subseteq G/B$ be a smooth Schubert variety in a Kac-Moody flag variety.  Then
\[
  P(X(w),q) := \sum_{v \leq w} q^{\ell(v)} = \prod_{\beta>0,\, s_\beta \leq w} \frac{ 1 - q^{\h(\beta)+1} }{ 1 - q^{\h(\beta)} }.
\]
\end{cor}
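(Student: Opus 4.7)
The plan is to apply Theorem~\ref{thm:app} to $X := X(w)$ equipped with the $\bbG_m$-action coming from the coweight $\eta := \rho^\vee$, characterized by $\langle \alpha_i, \eta\rangle = 1$ for every simple root $\alpha_i$; then $\eta$ acts on each root space $\g_\beta$ with weight $\h(\beta) \neq 0$, so $\eta$ is regular, and the $\bbG_m$-fixed points of $X(w)$ coincide with $\{e_v : v \leq w\}$. I use the Bia\l{}ynicki--Birula convention in which the cell at $p$ is $U(p) := \{x : \lim_{t \to \infty} t\cdot x = p\}$, which is the one matching the positivity of $N(p_\circ)$ in Theorem~\ref{thm:app}.

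The next step is to identify $p_\circ$. By the Dyer--Polo tangent space formula for Schubert varieties (see \cite{KumarKM}), the Zariski tangent space $T_{e_{id}}X(w)$ has $T$-weights $\{-\beta : \beta > 0,\ s_\beta \leq w\}$, and smoothness of $X(w)$ forces this multiset to have cardinality $\ell(w)$ (the Carrell--Peterson criterion). Under $\eta$, each such weight has $\bbG_m$-weight $-\h(\beta) < 0$, so every tangent weight at $e_{id}$ is negative, meaning $U(e_{id}) \cap X(w)$ has full dimension $\ell(w)$; hence $p_\circ = e_{id}$, with cotangent multiset
\[
N(e_{id}) \;=\; \{\h(\beta) : \beta > 0,\ s_\beta \leq w\}
\]
consisting of positive integers, as required by the theorem.

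To verify \eqref{app:cond}, fix $v \leq w$ with $v \neq id$ and choose a simple reflection $s_i$ with $s_iv < v$. The root subgroup $U_{\alpha_i}$ then acts non-trivially at $e_v$ inside the Schubert cell $X(v)^\circ \subseteq X(w)$, so $\alpha_i$ is a $T$-tangent weight of $X(w)$ at $e_v$; its cotangent direction has $\bbG_m$-weight $-\langle \alpha_i, \eta\rangle = -1$, placing $-1 \in N(e_v)$. Theorem~\ref{thm:app} now yields
\[
P(X(w),q) \;=\; \sum_{v \leq w} q^{\ell(w) - \ell(v)} \;=\; \prod_{\beta > 0,\, s_\beta \leq w} \frac{1 - q^{\h(\beta)+1}}{1 - q^{\h(\beta)}},
\]
using that $\ell(e_v) = \dim\bigl(U(e_v) \cap X(w)\bigr) = \ell(w) - \ell(v)$, the standard open-Richardson dimension. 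To close, the equality $\sum_{v\leq w}q^{\ell(w)-\ell(v)} = \sum_{v\leq w}q^{\ell(v)}$ is the palindromicity of the Poincar\'e polynomial of the smooth projective variety $X(w)$ (Poincar\'e duality).

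The main delicacy is matching the Bia\l{}ynicki--Birula conventions against Theorem~\ref{thm:app}: with $\eta = \rho^\vee$ and the usual $t \to 0$ attractor the cells are the familiar Schubert cells $X(v)^\circ$ of dimension $\ell(v)$, but then the open-cell fixed point is $e_w$ and its cotangent weights have the wrong sign for the theorem. Switching to $t \to \infty$ fixes the signs but moves the open cell to $e_{id}$, so the concluding palindromicity step (which uses smoothness) is what ties the theorem's cell dimensions back to the familiar $\sum_{v \leq w}q^{\ell(v)}$ form. Beyond this bookkeeping, the Dyer--Polo tangent formula is standard in Kac--Moody generality, Bia\l{}ynicki--Birula produces affine cells at smooth fixed points, and condition~\eqref{app:cond} follows uniformly from the existence of a left descent at every non-identity Weyl group element.
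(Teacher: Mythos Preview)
Your proof is correct and follows essentially the same route as the paper: restrict to the $\rho^\vee$-action, identify $p_\circ = e_{id}$, and verify condition~\eqref{app:cond} at each $e_v$ ($v\neq id$) via a simple root $\alpha_i$ with $s_iv<v$ (the paper does the identical step, phrased as choosing $\alpha_i$ with $v^{-1}(\alpha_i)<0$ and setting $\beta=-v^{-1}(\alpha_i)$, so that $v(\beta)=-\alpha_i$ is a cotangent weight of height $-1$).

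Your careful discussion of the $t\to\infty$ convention and the closing palindromicity step is correct but an unnecessary detour: Theorem~\ref{thm:app} outputs the Poincar\'e polynomial $P(X,q)$ itself (defined via Betti numbers), not merely the sum $\sum_p q^{\ell(p)}$; since $P(X(w),q)=\sum_{v\le w} q^{\ell(v)}$ is already the standard Schubert-cell computation, the desired equality with the product follows as soon as you have identified the cotangent weights $n_i=\h(\beta)$ at $p_\circ=e_{id}$, with no need to track the cell dimensions in the opposite decomposition.
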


\begin{proof}
Let $\check\rho\colon \mathbb{G}_m \to T$ be a cocharacter such that $\langle \alpha_i, \check\rho \rangle = 1$ for each simple root $\alpha_i$.  Then for any root $\beta$, we have $\h(\beta) = \langle \beta, \check\rho \rangle$.  Under the restriction homomorphism $K_T(pt) \to K_{\mathbb{G}_m}(pt)=\bbZ[q^{\pm 1}]$, one has $e^\beta \mapsto q^{\h(\beta)}$.

By a standard calculation (see, e.g. \cite[Theorem 8.1]{AMSS19:motivic}), the weights for $T$ acting on the cotangent space $T^*_{e_v}X(w)$ are $\{ v(\beta) \,|\, \beta>0 \text{ and } vs_\beta \leq w\}$, so the corresponding weights for the $\mathbb{G}_m$ action are $\{ \h(v(\beta)) \,|\, \beta>0 \text{ and } vs_\beta \leq w\}$.  Since these are all nonzero, we have $X^{\mathbb{G}_m} = X^T$.

To apply the theorem, it suffices to show that whever $v\neq id$, we have $\h(v(\beta))=-1$ for some $\beta>0$ such that $vs_\beta\leq w$.  Find a simple root $\alpha_i$ such that $v^{-1}(\alpha_i)<0$, and let $\beta = -v^{-1}(\alpha_i)$.  Then $v(\beta)<0$, so $vs_\beta\leq v \leq w$, and therefore $v(\beta)$ is a cotangent weight.  But also $v(\beta) = -\alpha_i$ has $\h(v(\beta))=-1$.
\end{proof}
In the case where $G$ is finite-dimensional, if one takes $w=w_0$, then $X(w) = G/B$ and Corollary~\ref{cor:app} gives the classical formula for the Poincar{\'e} polynomial of $G/B$:
\[
 P(G/B,q)=\sum_{v \in W}q^{\ell(v)}=\prod_{\alpha>0}\frac{1-q^{\h(\alpha)+1}}{1-q^{\h(\alpha)}} \/.
\]
In this case, our proof is similar to one given by Macdonald \cite{macdonald:poincare}, with the interesting twist that instead of the localization formula, Macdonald uses the Lefschetz fixed point formula for the left multiplication by an element of $T$. 
\begin{rem} For Schubert varieties in finite flag manifolds, the theorem holds over an algebraically closed field of arbitrary characteristic. This is because Schubert classes form a basis of the Chow ring of $G/B$ \cite[\S 3, Corollary 1(iii)]{Brion}, thus the Poincar{\'e} polynomial is   independent of the choice of the field (it only depends on the Weyl group). 
\end{rem}
\bibliographystyle{halpha}
\bibliography{whittakerbib}
\end{document}